\author{Sara Lapan}
\address{Department of Mathematics\\ University of California\\ Riverside, CA 92521}
\email{sara.lapan@ucr.edu}
\author{Benjamin Linowitz}
\address{Department of Mathematics\\ Oberlin College\\ Oberlin, OH 44074}
\email{benjamin.linowitz@oberlin.edu}
\author{Jeffrey S. Meyer}
\address{Department of Mathematics\\ 
California State University\\ 
San Bernardino, CA 92407}
\email{jeffrey.meyer@csusb.edu}
\title{Systole inequalities up congruence towers\\ for arithmetic locally symmetric spaces}
\DeclareMathAlphabet{\curly}{U}{rsfs}{m}{n}
\DeclareMathOperator{\Ad}{Ad}
\DeclareMathOperator{\diag}{diag}
\DeclareMathOperator{\Mat}{M}
\DeclareMathOperator{\nr}{nr}
\DeclareMathOperator{\SL}{SL}
\DeclareMathOperator{\SO}{SO}
\DeclareMathOperator{\SU}{SU}
\DeclareMathOperator{\sys}{sys}
\DeclareMathOperator{\Sp}{Sp}
\DeclareMathOperator{\tr}{tr}
\DeclareMathOperator{\Vol}{Vol}
\DeclareMathOperator{\arccosh}{arccosh}
\DeclarePairedDelimiter\norm{\lVert}{\rVert}%
\newtheorem{thm}{Theorem}[section]
\newtheorem{cor}[thm]{Corollary}
\newtheorem{prop}[thm]{Proposition}
\newtheorem{lem}[thm]{Lemma}
\theoremstyle{definition}
\newtheorem{rem}[thm]{Remark}
\newtheorem*{rmk}{Remark}
\newtheorem*{ack}{Acknowledgements}
\theoremstyle{remark}
\newtheorem{proposition}{Proposition}[section]
\def\1{\mathbf{1}}
\theoremstyle{plain}
\newtheorem{mainthm}{Theorem}
\theoremstyle{remark}
\theoremstyle{plain}
\newtheorem{lemma}[proposition]{Lemma}
\def\C{\mathbf{C}}
\def\Q{\mathbf{Q}}
\def\R{\mathbf{R}}
\def\Q{\mathbf{Q}}
\def\Z{\mathbf{Z}}
\def\1{\mathbf{1}}
\newcommand{\abs}[1]{\left\vert#1\right\vert}
\def\moverlay{\mathpalette\mov@rlay}
\def\mov@rlay#1#2{\leavevmode\vtop{%
   \baselineskip\z@skip \lineskiplimit-\maxdimen
   \ialign{\hfil$\m@th#1##$\hfil\cr#2\crcr}}}
\newcommand{\charfusion}[3][\mathord]{
    #1{\ifx#1\mathop\vphantom{#2}\fi
        \mathpalette\mov@rlay{#2\cr#3}
      }
    \ifx#1\mathop\expandafter\displaylimits\fi}
\let\@@pmod\pmod
\DeclareRobustCommand{\pmod}{\@ifstar\@pmods\@@pmod}
\def\@pmods#1{\mkern4mu({\operator@font mod}\mkern 6mu#1)}
\begin{document}

\begin{abstract}
In this paper we study the systole growth of arithmetic locally symmetric spaces up congruence covers and show that this growth is at least logarithmic in volume. This generalizes previous work of Buser and Sarnak, and Katz, Schaps and Vishne in the context of compact arithmetic hyperbolic manifolds of dimension $2$ and $3$.\end{abstract}

\maketitle 


\vspace{-2pc}
\section{Introduction}

The \textit{systole} of a compact Riemannian manifold $M$ is the least length of a non-contractible loop on $M$. The systole $\sys(M)$ of $M$ and the volume $\Vol(M)$ of $M$ are deeply related and have been the focus of considerable research. For instance, Gromov \cite{gromov} showed that for a closed aspherical $n$-manifold $M$, there exists a constant $c:=c(n)$ such that
\begin{align}
\sys(M)\le c(\Vol(M))^{1/n}.
\end{align}
Note that all compact locally symmetric spaces of nonpositive curvature are closed aspherical.

Of particular interest has been the study of how systoles grow along congruence covers of a given base manifold $M$. Buser and Sarnak \cite{BS} showed that when $M$ is a compact arithmetic hyperbolic surface arising from a quaternion division algebra over $\Q$ there exists a constant $c:=c(M)$ such that the principal congruence covers $\{M_I\}$ of $M$ satisfy 
\begin{align}\label{eq:bs}
\sys(M_I) \ge \frac{4}{3}\log( \mathrm{g}(M_I))-c,
\end{align} 
where $\mathrm{g}(M_I)$ denotes the genus of $M_I$ and $\log$ denotes the natural logarithm. This result was later extended to principal congruence covers of arbitrary compact arithmetic hyperbolic surfaces by Katz, Schaps and Vishne \cite{KSV}. Furthermore, Katz, Schaps and Vishne proved an analogous result for compact arithmetic hyperbolic $3$-manifolds; namely, for a suitable constant $c:=c(M)$, the principal congruence covers $\{M_I\}$ of $M$ satisfy
\begin{align}\label{eq:ksv}
 \sys(M_I) \ge \frac{2}{3}\log (\norm{M_I})-c,
\end{align}
where $\norm{M_I}$ denotes the simplicial volume of $M_I$ (see \cite[Chapter 6]{Thurston}). These results were later generalized by Murillo to arithmetic hyperbolic manifolds of dimension $n$ which are of the first type \cite{M2}  and to Hilbert modular varieties \cite{M1} in the case that $I$ varies across the set of prime ideals in a certain number field.


The goal of this paper is to prove a generalization of the aforementioned Buser--Sarnak inequality (\ref{eq:bs}) for all arithmetic simple locally symmetric manifolds.

Unlike in the case of hyperbolic manifolds, there are multiple natural choices for how to scale the metric on a generic locally symmetric manifold.  
In Section \ref{section:prelims} we discuss such choices and explain in detail how scaling the metric affects the systole, volume, and systole growth up a tower of covers.
For instance, in Proposition \ref{prop:fundamental2} we show the important property that if systole growth is at least logarithmic up a tower relative to a given metric, then for any rescaling of the metric, systole growth is still at least logarithmic.

Before proving the general case, we focus on \textit{standard special linear manifolds}.  
A \textit{special linear manifold} of degree $n$ is a manifold of the form $M_\Gamma:=\Gamma\backslash \SL_n(\R) / \SO(n)$ where $\Gamma\subset \SL_n(\R)$ is a torsion-free lattice.
We call such lattices \textit{standard} when they arise from central simple algebras (see Section \ref{section:centralalgebralattices}), and we note that this terminology is in analogy to how arithmetic hyperbolic manifolds arising from quadratic forms are called \textit{standard arithmetic hyperbolic manifolds} \cite[4.10]{Meyer}.
Note that standard special linear manifolds are arithmetic and all arithmetic hyperbolic $2$-manifolds are also standard special linear manifolds of degree 2.

With Proposition \ref{prop:fundamental2} in mind, we choose a convenient normalization of the metric on special linear manifolds so that the sectional curvature is bounded between 0 and 1, and we call this normalization the \textit{geometric metric} $g$.
Given a standard special linear manifold $M$ and a rational prime $p$, we denote by $\{M_{p^m}\}$ the principal $p$-congruence tower of $M$ (see Section \ref{section:congruencesubgroups}).
We show that the systole growth up all but finitely many $p$-congruence towers is at least logarithmic in volume. 
Note that by defining the systole of a noncompact locally symmetric manifold to be the minimal length of a closed geodesic, we may consider systole growth along congruence towers of such noncompact manifolds as well.

\begin{mainthm}\label{thm:speciallinear}
Let $M$ be a standard special linear manifold of degree $n$, $n\ge 2$. There exists a constant $c:=c(M,g)$ such that for all but finitely many primes $p$ and all positive integers $m$,
\begin{align}\label{eq:generalbs}
 \sys(M_{p^m},g)\ge \frac{2\sqrt{2}}{n(n^2-1)} \log(\Vol(M_{p^m},g)) - c.
\end{align}
\end{mainthm}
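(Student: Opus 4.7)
The proof adapts the Buser--Sarnak and Katz--Schaps--Vishne strategy to the $\SL_n$-setting, and proceeds via two largely independent ingredients: an arithmetic lower bound on $\|\rho(\gamma)-I\|_{\mathrm{op}}$ for nontrivial $\gamma\in\Gamma(p^m)$, and a sharp geometric inequality bounding this operator norm by the exponential of the translation length of $\gamma$ on the symmetric space $X=\SL_n(\R)/\SO(n)$. Closed geodesics in $M_{p^m}$ correspond to conjugacy classes of $\R$-regular semisimple $\gamma\in\Gamma(p^m)$, and in the geometric metric $g$ the length of the corresponding closed geodesic equals the translation length
\[
\ell_g(\gamma)=c_g\sqrt{\sum_{i=1}^n\bigl(\log|\lambda_i(\rho(\gamma))|\bigr)^{2}},
\]
where $\lambda_i(\rho(\gamma))$ are the eigenvalues of $\rho(\gamma)\in\SL_n(\R)$ and $c_g>0$ is the scaling constant relating $g$ to the Killing-form metric.

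For the arithmetic step, let $A$ be the central simple $k$-algebra of degree $n$ used to construct $\Gamma$, let $\mathcal{O}\subset A$ be the corresponding order, and fix the embedding $\rho:A\otimes_k k_v\cong M_n(\R)$ at the split archimedean place $v$. Write $\gamma=1+p^m\beta$ with $\beta\in\mathcal{O}\setminus\{0\}$. Since $\Gamma$ is a lattice in the single factor $\SL_n(\R)$, the group $\SL_1(A\otimes_k k_w)$ must be compact at every other archimedean place $w$ of $k$; this is vacuous when $k=\Q$, and when $n=2$ reflects the totally-real / quaternion structure of the Buser--Sarnak and Katz--Schaps--Vishne setup. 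At each such $w$, $\sigma_w(\gamma)$ lies in a fixed compact group and so $|\sigma_w(\beta)|=O(p^{-m})$; in particular every nonzero coordinate $a\in\mathcal{O}_k\setminus\{0\}$ of $\beta$ in a fixed $\mathcal{O}_k$-basis of $\mathcal{O}$ satisfies $|\sigma_w(a)|=O(p^{-m})$ for $w\neq v$. The product formula $|N_{k/\Q}(a)|=\prod_\tau|\sigma_\tau(a)|\geq 1$ then forces $|\sigma_v(a)|\geq C' p^{m([k:\Q]-1)}$, and multiplying by the extra $p^m$ from $\gamma-1=p^m\beta$ yields
\[
\|\rho(\gamma)-I\|_{\mathrm{op}}\geq C_1\, p^{m[k:\Q]}
\]
for all but finitely many primes $p$ (those dividing the reduced discriminant of $\mathcal{O}$, or at which the standard covolume or strong approximation statements fail).

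For the geometric step, any $\R$-regular semisimple $\gamma\in\SL_n(\R)$ has singular values $\sigma_1\geq\cdots\geq\sigma_n$ with $\prod\sigma_i=1$, and the extremal Cartan configuration $(\sigma_1,\sigma_2,\ldots,\sigma_n)=(e^s,e^{-s/(n-1)},\ldots,e^{-s/(n-1)})$ minimizes $\sum(\log\sigma_i)^2=\tfrac{n}{n-1}s^2$ at fixed $\sigma_1=e^s$. With $g$ normalized as the geometric metric discussed in Section~\ref{section:prelims}, this produces the sharp bound $\log\sigma_1\leq\tfrac{n}{2\sqrt{2}}\,\ell_g(\gamma)$. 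Combined with $\sigma_1=\|\rho(\gamma)\|_{\mathrm{op}}\geq\|\rho(\gamma)-I\|_{\mathrm{op}}-1\geq C_1 p^{m[k:\Q]}-1$, this yields $\ell_g(\gamma)\geq\tfrac{2\sqrt{2}}{n}m[k:\Q]\log p-C_2$.

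Finally, for all but finitely many primes $p$ the standard index and covolume formulas give $[\Gamma:\Gamma(p^m)]\asymp p^{m[k:\Q](n^2-1)}$, so $\log\Vol(M_{p^m},g)=m[k:\Q](n^2-1)\log p+O(1)$. The factor $[k:\Q]$ cancels in the ratio, producing the universal leading coefficient $\frac{2\sqrt{2}}{n(n^2-1)}$. The main technical obstacle is the arithmetic estimate on $\|\rho(\gamma)-I\|_{\mathrm{op}}$ when $k\neq\Q$, where a single coordinate of $\beta$ can embed to an arbitrarily small real number at $v$; only the compactness of $A$ at the remaining archimedean places (built into the definition of a standard special linear manifold, via the requirement that $\Gamma$ descend to a lattice in $\SL_n(\R)$) together with the product formula rescues the estimate. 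A secondary challenge is calibrating the Cartan extremal inequality against the chosen normalization of $g$; Proposition~\ref{prop:fundamental2} ensures that logarithmic growth is metric-invariant, so that only the precise coefficient, not the existence of the inequality, depends on this last calibration.
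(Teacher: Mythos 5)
There is a genuine gap in your geometric step. The translation length of a semisimple $x\in\SL_n(\R)$ is computed from the \emph{eigenvalue moduli}, $\ell(x)=\sqrt{2\sum_i(\log|a_i|)^2}$ as in \eqref{eq:geometriclength}, and these are conjugation invariant; your argument instead feeds in the \emph{singular values} $\sigma_i$ of $\rho(\gamma)$, via $\sigma_1=\norm{\rho(\gamma)}_{\mathrm{op}}$, which are not. The quantity your Cartan extremal computation controls is the displacement of the basepoint, $d(x_0,\gamma x_0)$, and this only satisfies $\ell(\gamma)=\inf_x d(x,\gamma x)\le d(x_0,\gamma x_0)$ — the inequality goes the wrong way, so a lower bound on $\sigma_1$ gives no lower bound on $\ell(\gamma)$. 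Concretely, the claimed inequality $\ell_g(\gamma)\ge\frac{2\sqrt2}{n}\log\sigma_1(\gamma)$ is false for semisimple elements: take $\gamma=g r g^{-1}$ with $r\in\SO(n)$ a rotation and $g=\diag(t,t^{-1},1,\dots,1)$, $t\gg0$; then $\ell(\gamma)=0$ while $\norm{\gamma}_{\mathrm{op}}$ is arbitrarily large. Your arithmetic step (the product-formula estimate on $\gamma-1=p^m\beta$) likewise only produces a lower bound on this non-conjugation-invariant norm, so it does not by itself rule out a nontrivial $\gamma\in\Gamma(p^m)$ with huge matrix entries but eigenvalues barely off the unit circle, i.e.\ with small translation length. (Two smaller points: your own extremal computation with $\prod\sigma_i=1$ would give a factor $\sqrt{2n/(n-1)}$, not $\frac{2\sqrt2}{n}$, so the stated constant does not follow from the argument given; and the paper's standard special linear manifolds are defined from central simple algebras over $\Q$, so the general-$k$ compactness discussion is beside the point here.)

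The paper closes exactly this gap with conjugation-invariant arithmetic: since the reduced traces of elements of a maximal order are rational integers and $\gamma\equiv 1\pmod{p^m}$, one gets $\tr(\gamma^q)\equiv n\pmod{p^m}$ for all $q$, and Theorem \ref{thm:largetrace} shows (via Newton's identities and $s_i(x)=s_{n-i}(x^{-1})$) that not all $\tr(\gamma^q)$ with $|q|\le n/2$ can stay small unless $\gamma=1$; the Trace--Length Bounds Theorem \ref{thm:tracelengthbounds} then converts the large trace of some power into a lower bound on $\ell(\gamma)$, yielding Corollary \ref{cor:tracelengthcor} and the constant $\frac{2\sqrt2}{n(n^2-1)}$ after the index bound $\abs{\Gamma:\Gamma(p^m)}\le (p^m)^{n^2-1}$. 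If you want to keep your norm-based route (in the spirit of Buser--Sarnak and Murillo), you must add the missing mechanism that lets you compare minimal displacement with displacement at $x_0$: use that $\Gamma(p^m)$ is normal in $\Gamma$ to conjugate the point realizing the systole into a fixed fundamental domain of the base $M$, giving $\ell(\gamma)\ge d(x_0,\gamma' x_0)-2\,\mathrm{diam}(M)$ for some $\Gamma$-conjugate $\gamma'\in\Gamma(p^m)$. That repair needs $M$ compact (bounded diameter), whereas Theorem \ref{thm:speciallinear} also covers noncompact $M$, where one would additionally have to control closed geodesics running into the cusps; the paper's trace argument avoids all of this.
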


In the special case when $n=2$ and $M$ is compact, the Gauss--Bonnet theorem states that the genus $\mathrm{g}(M_{p^m})$ of $M_{p^m}$ satisfies $\mathrm{g}(M_{p^m})=\frac{\Vol(M_{p^m},g)}{2\pi}+1$, and hence Theorem \ref{thm:speciallinear} gives
\begin{align*}
\sys(M_{p^m}) \ge \frac{\sqrt{2}}{3} \log(\mathrm{g}(M_{p^m})) - c'
\end{align*}
where $c'=c'(M)$ is a constant.  Observe that this is close to recovering \eqref{eq:bs}.

These methods enable use to prove a similar result for noncompact standard real, complex, or quaternionic arithmetic manifolds (see Section \ref{section:hyperbolic} for constructions).  
Unless otherwise stated, real hyperbolic manifolds will be given the hyperbolic metric in which they have constant sectional curvature $-1$, and the systole and volume will be scaled accordingly.  Similarly, we scale the metrics on complex and quaternionic hyperbolic manifolds so that their sectional curvature is bounded between $-1$ and $-\frac{1}{4}$.

%


\begin{mainthm}\label{thm:standardhyperbolic}
Let $N$ be a noncompact standard real (resp. complex, quaternionic) arithmetic hyperbolic manifold of dimension $n$ (resp. $2n$, $4n$), $n\ge 2$.
There exists a constant $c_2:=c_2(N)$  such that for all but finitely many primes $p$ and all positive integers $m$,
\begin{align}
 \sys(N_{p^m})\ge c_1 \log(\Vol(N_{p^m}))-c_2,
\end{align}
where 
$$c_1=\begin{cases}
\dfrac{2\sqrt{2}}{n(n+1)^2} & \mbox{ when $N$ is real hyperbolic,}\\
\dfrac{1}{ n(n+1)(n+2)} & \mbox{ when $N$ is complex hyperbolic,}\\
\dfrac{1}{2\sqrt{2}(n+1)^2(2n+3)} & \mbox{ when $N$ is quaternionic hyperbolic.}
\end{cases}$$
\end{mainthm}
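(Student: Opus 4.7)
The plan is to reduce to the setting of Theorem A by realizing each hyperbolic lattice inside $\SL_N(\mathbf{R})$ for an appropriate $N$, then to track how metrics, singular values, and group dimensions transform under this reduction. Throughout I would set $N = n+1$, $2(n+1)$, or $4(n+1)$ in the real, complex, and quaternionic cases respectively.

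First I would fix a matrix realization of $\Gamma$. In the real case $\Gamma$ is commensurable with $\SO(f;\mathbf{Z})$ for a quadratic form $f$ of signature $(n,1)$ over $\mathbf{Q}$ (noncompactness forces $f$ to be $\mathbf{Q}$-isotropic), sitting naturally inside $\SL_{n+1}(\mathbf{Z})$. In the complex and quaternionic cases $\Gamma$ is commensurable with the integer points of the corresponding Hermitian or quaternionic-Hermitian unitary group, which embeds in $\SL_N(\mathbf{Z})$ by replacing each complex entry with its $2\times 2$ real realization and each quaternionic entry with its $4\times 4$ real realization.

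For all but finitely many primes $p$, the principal $p$-congruence subgroup $\Gamma(p^m)$ agrees with $\Gamma \cap \ker(\SL_N(\mathbf{Z}) \to \SL_N(\mathbf{Z}/p^m))$, so any nontrivial $\gamma \in \Gamma(p^m)$ satisfies $\gamma - I = p^m A$ with $A$ a nonzero integer matrix; in particular $\|\gamma - I\|_F \ge p^m$. A loxodromic element with hyperbolic translation length $\ell$ has an explicit Cartan normal form whose singular values in $\SL_N(\mathbf{R})$ are $e^{\pm \ell}$ (each occurring with multiplicity $1$, $2$, or $4$ in the three cases) together with unit singular values coming from the compact factor. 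Comparing this description of $\gamma$ with the lower bound $\|\gamma - I\|_F \ge p^m$ produces an estimate of the shape $\ell(\gamma) \ge c' \, m \log p - O(1)$, with $c'$ encoding the hyperbolic-to-matrix displacement ratio in each case. For the volume I would use $[\Gamma : \Gamma(p^m)] \le |G(\mathbf{Z}/p^m\mathbf{Z})| \le C\, p^{m \dim G}$ for large $p$, where $G$ is the underlying $\mathbf{Q}$-algebraic group; the relevant real dimensions are $\dim \SO(n,1) = n(n+1)/2$, $\dim \SU(n,1) = n(n+2)$, and $\dim \Sp(n,1) = (n+1)(2n+3)$. Therefore $\log \Vol(N_{p^m}) \le (\dim G)\, m \log p + O(1)$, and combining with the systole bound gives the claimed inequalities with the stated constants after matching coefficients.

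The main obstacle is pinning down the constant $c'$, which requires tracking the precise scaling between the hyperbolic metric (curvature $-1$ in the real case, in $[-1,-\frac{1}{4}]$ otherwise) and the induced metric from $\SL_N(\mathbf{R})/\SO(N)$; in each case the factor arises from the number of real singular values equal to $e^{\pm \ell}$ in the Cartan form, and invoking Proposition \ref{prop:fundamental2} to handle residual rescaling. A secondary point, specific to the noncompact setting, is to confirm that the systole $\sys(N_{p^m})$ is realized by a loxodromic element (so that the singular value analysis applies and parabolic elements of $\Gamma(p^m)$ do not interfere); this follows from the definition of systole of a noncompact locally symmetric manifold as the minimal length of a closed geodesic.
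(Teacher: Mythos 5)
Your overall architecture (embed $\Gamma$ into $\SL_N(\R)$ with $N=n+1$, $2n+2$, $4n+4$, bound the index by $|\mathrm{G}(\Z/p^m\Z)|\le (p^m)^{\dim\mathrm{G}}$, then renormalize between the hyperbolic metric and the metric induced from $\SL_N(\R)/\SO(N)$) is the same as the paper's. But the step that produces the systole lower bound has a genuine gap. You argue: $\gamma-I=p^mA$ forces $\lVert \gamma - I\rVert_F\ge p^m$, and then you compare with the singular values $e^{\pm\ell}$ of the Cartan normal form to conclude $\ell(\gamma)\gtrsim m\log p$. The problem is that the Frobenius norm and the singular values of the particular integral matrix $\gamma$ are not conjugation invariants, whereas the translation length is: $\ell(\gamma)$ is computed from the moduli of the eigenvalues of (the hyperbolic part of) $\gamma$, equivalently from the Cartan form of a conjugate of $\gamma$ placed in standard position along its axis. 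What the singular values of $\gamma$ itself control is the displacement $d(o,\gamma o)$ of the basepoint, which can be arbitrarily larger than $\ell(\gamma)=\inf_x d(x,\gamma x)$. Concretely, a loxodromic element congruent to $I$ mod $p^m$ can have enormous entries while its eigenvalues stay very close to the unit circle (conjugate a small boost by a matrix with huge entries), so $\lVert\gamma-I\rVert_F\ge p^m$ gives no lower bound on $\ell(\gamma)$ by itself. One cannot repair this by conjugating the systole-realizing element back near the basepoint, since in this noncompact (finite-volume) setting there is no diameter bound.

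The paper closes exactly this gap by working with conjugation-invariant integer data instead of matrix norms: for $x\in\Gamma(p^m)$ the reduced traces satisfy $\tr(x^q)\equiv N \pmod{p^m}$, and Theorem \ref{thm:largetrace} (via Newton's identities and semisimplicity) shows that unless $x=1$ some $|\tr(x^q)|$ with $|q|\le N/2$ exceeds $p^m-N$; the Trace--Length Bounds Theorem \ref{thm:tracelengthbounds} (through Corollary \ref{cor:tracelengthcor} and \eqref{firstsystolebound}) then converts this into $\ell(x)\gtrsim \frac{2\sqrt{2}}{N}\log\bigl(\frac{p^m-N}{N}\bigr)$. To recover your claimed constants you would need to replace your norm/singular-value comparison by this trace argument (or an equivalent argument on the characteristic polynomial coefficients, which are integers congruent to those of $(X-1)^N$), and then carry out the metric bookkeeping you deferred, which the paper does in Lemma \ref{lem:metricrenormalization}: the subspace metrics satisfy $h_{\R}=\frac14 g_{\R}$, $h_{\C}=\frac12 g_{\C}$, $h_{\mathbf H}=\frac14 g_{\mathbf H}$, and these $\sqrt{\kappa}$ factors, together with $\dim\SO(n+1,1)$, $\dim\SU(n,1)$, $\dim\Sp(n,1)$, produce the stated values of $c_1$. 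Your observation that parabolic elements do not interfere because the systole is defined via closed geodesics (semisimple elements) is correct and matches the paper's convention.
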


Note that in \cite{M2} Murillo used different methods in order to obtain, in the case that $N$ is a standard real arithmetic hyperbolic $n$-manifold, a result analogous to Theorem \ref{thm:standardhyperbolic} with a constant of $\frac{8}{n(n+1)}$. Moreover, in an appendix to \cite{M2} D\'oria and Murillo show that the constant $\frac{8}{n(n+1)}$ is sharp. It is not known what the optimal constants are in the cases of complex and quaternionic hyperbolic manifolds.

A major ingredient in our proofs of Theorems \ref{thm:speciallinear} and  \ref{thm:standardhyperbolic}  is our Trace-Length Bounds Theorem \ref{thm:tracelengthbounds}.  In both $\SL_2(\R)$ and $\SL_2(\C)$, the translation length of a semisimple element can be understood in terms of the trace of the element.  This relationship has proven to be extremely useful, as the trace is well understood from a number theoretic perspective.  In $\SL_n(\R)$, $n\ge 3$, the relationship between translation length and trace is more nuanced.  Nevertheless, in our Trace-Length Bounds Theorem \ref{thm:tracelengthbounds} we prove upper and lower bounds for the translation length of a semisimple element in terms of the element's trace. 

In Section \ref{section:locallysymmetric} we show that each arithmetic simple locally symmetric manifold $N$ is commensurable to an immersed totally geodesic submanifold of a standard special linear manifold of explicitly bounded degree (Theorem \ref{thm:virtualimmersion}).  
Relative to this immersion, we endow each $N$ with the subspace metric which we also denote $g$.
For each rational prime $p$, this immersion induces a $p$-congruence tower $\{N_{p^m}\}$ above $N$.
While the induced $p$-congruence tower seems dependent upon the immersion, it is in fact natural in that it is commensurable of bounded distance (see Section \ref{section:prelims}) to the tower associated to the principal congruence subgroups $\mathrm{ker}(\mathrm{G}(\mathcal{O}_k)\to \mathrm{G}(\mathcal{O}_k/p^m\mathcal{O}_k))$ (see Remark \ref{rem:natrualcongruence}).

In addition to its associated Riemannian volume, each $N$ has an \textit{arithmetic measure} $\mu_a$ in the sense of Prasad \cite{P}.  
We believe that $\mu_a$ is the most natural measure for a general $N$, the most easily computable thanks to Prasad's volume formula \cite[Theorem 3.7]{P}, and hence that stating our results in terms of $\mu_a$ is most likely to be of use.  
That being said, there is an analogous statement for when arithmetic measure is replaced by metric volume.

\begin{mainthm}\label{thm:general}
Let $N$ be an arithmetic simple locally symmetric manifold of dimension $n$ and arithmetic measure $\mu_a(N)<v$.
Then for all but finitely many primes $p$ and all positive integers $m$,
\begin{align}\label{eq:generalbs}
 \sys(N_{p^m},g)\ge c_1 \log(\mu_a(N_{p^m})) - c_2,
\end{align}
where $c_1:=c_1(n,v)$ and $c_2:=c_2(N)$ are explicit constants.
\end{mainthm}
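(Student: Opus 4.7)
The plan is to deduce Theorem C from Theorem A by transporting the problem to a standard special linear manifold via the virtual immersion machinery. By Theorem \ref{thm:virtualimmersion}, $N$ is commensurable to an immersed totally geodesic submanifold of some standard special linear manifold $M$ of degree $d$, where $d$ is bounded solely in terms of $n$ and $v$. After passing to a common cover -- permissible by Remark \ref{rem:natrualcongruence}, which costs only a uniformly bounded index -- I may assume that $N$ sits in $M$ as a genuine totally geodesic submanifold and that the $p$-congruence tower $\{N_{p^m}\}$ lies inside $\{M_{p^m}\}$ as totally geodesic submanifolds.

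I would then carry out three key steps. First, the \emph{systole comparison}: since $N_{p^m}$ is totally geodesic in $M_{p^m}$ and $\Gamma_{N_{p^m}}\subset \Gamma_{M_{p^m}}$, the axis in the ambient symmetric space of any semisimple $\gamma\in\Gamma_{N_{p^m}}$ lies inside the totally geodesic subspace corresponding to $N$, so the translation length of $\gamma$ is the same on $N_{p^m}$ as on $M_{p^m}$; minimizing over both lattices gives
\[
\sys(N_{p^m},g)\ge \sys(M_{p^m},g).
\]
Second, I apply Theorem \ref{thm:speciallinear} to $M$ to obtain, for all but finitely many primes $p$,
\[
\sys(M_{p^m},g)\ge \frac{2\sqrt{2}}{d(d^2-1)}\log\Vol(M_{p^m},g)-c(M,g).
\]
Third, the \emph{volume comparison}: let $\mathrm{H}$ denote the $\mathbf{R}$-algebraic group defining $N$, and note $\dim\mathrm{H}\le \dim\SL_d=d^2-1$ since $\mathrm{H}\hookrightarrow\SL_d$ as an algebraic subgroup. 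Using Prasad's volume formula for both $N$ and $M$, the quantities $\log\Vol(M_{p^m},g)$ and $\log\mu_a(N_{p^m})$ are each linear in $m\log p$ with positive leading coefficients $d^2-1$ and $\dim\mathrm{H}$ respectively, so an explicit index computation at $p^m$ yields
\[
\log\Vol(M_{p^m},g)\ge \frac{d^2-1}{\dim\mathrm{H}}\log\mu_a(N_{p^m})-c'
\]
with $c'=c'(N)$. Chaining the three inequalities gives the desired bound with $c_1=\frac{2\sqrt{2}}{d\cdot\dim\mathrm{H}}$, which is positive and bounded below purely in terms of $n$ and $v$ (through the bound on $d$ and the fact that $\dim\mathrm{H}\le d^2-1$).

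The principal obstacle is the \emph{volume comparison step}. Although the leading-order asymptotic ratio is transparent from the growth rates of the principal congruence indices, obtaining an inequality that holds for \emph{every} $m$ (not merely asymptotically) requires careful handling of Prasad's local factors at $p$ and at the bad places of both $N$ and $M$, together with control of the fixed ratio between the arithmetic measure $\mu_a$ and the Riemannian volume associated to $g$. A secondary technical point is that the set of excluded primes must depend only on $N$: this follows because Theorem A excludes only finitely many primes for the fixed $M$ produced by Theorem \ref{thm:virtualimmersion}, and any further primes required to realize the commensurability as a genuine tower inclusion also form a finite set determined by $N$.
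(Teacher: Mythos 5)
Your reduction to the special linear setting (virtual totally geodesic immersion, systole comparison for totally geodesic submanifolds, passage between commensurable towers of bounded distance via Proposition \ref{prop:fundamental2}) matches the paper's strategy, and your chain of inequalities would indeed produce a bound with multiplicative constant roughly $\frac{2\sqrt{2}}{d\cdot \dim\mathrm{H}}$. But there is a genuine gap at the very point that distinguishes Theorem \ref{thm:general} from Corollary \ref{cor:general_inequality_d1d2_vol}: you assert that the degree $d$ of the ambient special linear manifold ``is bounded solely in terms of $n$ and $v$,'' yet Theorem \ref{thm:virtualimmersion} only gives $d=[k:\Q]\cdot\dim\mathrm{G}$, and nothing in your argument controls $[k:\Q]$ by the measure bound $v$. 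This is exactly the content of the paper's Proposition \ref{prop:volumedegreebound}: one applies Prasad's volume formula to a principal arithmetic subgroup containing $\Lambda_N$, discards the discriminant factors $\mathrm{D}_k^{\frac{1}{2}\dim\mathrm{G}}$ and $\mathrm{D}_\ell/\mathrm{D}_k^{[\ell:k]}$ and the Euler factor $\mathscr{E}$ (all $\geq 1$), uses Kottwitz's theorem that the Tamagawa number is $1$, and is left with $v\geq f(m_1,\dots,m_r)^{[k:\Q]}$, whence $[k:\Q]\leq c\log v$ via the table of lower bounds for $f$. Without this step (including the reduction to the case that $\Lambda_N$ lies in a principal arithmetic subgroup), your $c_1$ depends on the field of definition $k$ and you have only proved the corollary with $c_1=c_1(d_1,d_2)$, not the theorem with $c_1=c_1(n,v)$.

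Two secondary points. First, your volume-comparison step is more roundabout than necessary and is where you anticipate trouble with ``Prasad's local factors'': the paper avoids any lower bound on $\Vol(M_{p^m})$ by bounding the systole of $N_{p^m}$ directly through the trace estimates for elements of $\Gamma(p^m)$ (Corollary \ref{cor:tracelengthcor}) and bounding the covering degree $\abs{N_{p^m}:N}$ above by $\abs{\mathrm{G}(\mathcal{O}_k/p^m\mathcal{O}_k)}$, so only upper bounds on indices are needed and arithmetic measure enters merely through the ratio identity of Remark \ref{rem:volumemeasure}. Second, your growth exponent for $\mu_a(N_{p^m})$ is not $\dim\mathrm{H}$ for the real group: over a field of degree $d_2$ the principal $p^m$-congruence index can grow like $(p^m)^{d_1d_2}$ (the relevant group is $\mathrm{R}_{k/\Q}\mathrm{G}$), so your intermediate inequality should use $d_1d_2$ in place of $\dim\mathrm{H}$; this only changes the explicit constant, but as written the stated comparison can fail.
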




Simple locally symmetric manifolds that are neither real nor complex hyperbolic are arithmetic, and hence Theorem \ref{thm:general} applies.  
In the case of arithmetic hyperbolic manifolds with the hyperbolic metric, we  prove the following theorem which makes explicit the dependence of the multiplicative constant on the volume.


\begin{mainthm}\label{cor:mainhyperbolic}
Let $N$ be an arithmetic hyperbolic $n$-manifold with hyperbolic volume less than $V$. There exists an absolute, effectively computable constant $c_1:=c_1(n)>0$, and a constant $c_2:=c_2(N)$  such that for all but finitely many primes $p$ and all positive integers $m$,
\begin{align}\label{eq:generalbs}
 \sys(N_{p^m})\ge \frac{c_1}{(\log(V))^3} \log(\Vol(N_{p^m})) - c_2.
\end{align}
\end{mainthm}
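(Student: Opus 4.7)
The plan is to derive Theorem~D as a quantitative specialization of Theorem~C, making explicit the dependence of the constant $c_1(n,v)$ on the volume bound $v$ for arithmetic hyperbolic manifolds. First I would apply Theorem~\ref{thm:general} to $N$ to obtain a systole lower bound of the form $c_1(n,v_a)\log(\mu_a(N_{p^m})) - c_2$, where $v_a$ is an upper bound for the arithmetic measure of $N$. Using Prasad's volume formula, the ratio $\mu_a(N)/\Vol(N)$ is an explicit expression in terms of the discriminant of the field of definition, local factors at ramified primes, and the degree of the field extension. Since the principal congruence covers are compatible under this comparison, the $\log(\mu_a(N_{p^m}))$ term differs from $\log(\Vol(N_{p^m}))$ by an additive constant depending only on $N$ (and absorbable into $c_2$), and $v_a$ can be replaced by an explicit function of $V$.

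The key step is to track how $c_1(n,v)$ scales with $v$. By Theorem~\ref{thm:virtualimmersion}, $N$ is commensurable to an immersed totally geodesic submanifold of a standard special linear manifold of degree $N=N(n,d)$, where $d=[k:\Q]$ is the degree of the field of definition. From the form of the constant $\frac{2\sqrt{2}}{n(n^2-1)}$ in Theorem~A and the general Trace--Length Bounds Theorem (Theorem~\ref{thm:tracelengthbounds}), the constant $c_1(n,v)$ in Theorem~C for a manifold immersed in $\SL_N$ scales like $1/(N(N^2-1))$, because the translation length lower bound degrades polynomially in the degree of the ambient special linear group.

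Next I would invoke the classical bound (due to Borel, refined by Chinburg--Friedman and Belolipetsky) that the degree $d$ of the field of definition of an arithmetic hyperbolic $n$-manifold of hyperbolic volume at most $V$ satisfies $d \le C_0(n)\log V$. Consequently the ambient degree $N(n,d)$ is bounded by $C_1(n)\log V$, so that
\begin{align*}
c_1(n,v) \;\ge\; \frac{c(n)}{N(N^2-1)} \;\ge\; \frac{c_1(n)}{(\log V)^3}.
\end{align*}
Combining this with the volume comparison in the previous paragraph yields the statement of Theorem~D.

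The main obstacle is the careful bookkeeping of how the arithmetic parameters of $N$ (degree of $k$, discriminant, ramification) enter the multiplicative constant of Theorem~C, together with the corresponding comparison between $\mu_a$ and $\Vol$. The cubic power $(\log V)^3$ arises naturally from the factor $N(N^2-1)$ in the trace--length bound applied to the ambient $\SL_N$; verifying that no additional $\log V$ factors appear from Prasad's formula or from the virtual immersion requires an explicit but routine check using the Borel-type bounds on $d$ and on the discriminant $d_k$.
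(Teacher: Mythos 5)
Your strategy is essentially the paper's: the multiplicative constant in the congruence systole bound is polynomial in the degree $d=[k:\Q]$ of the field of definition (via the immersion into an ambient special linear group of degree $d\cdot\dim\mathrm{G}$ and the Trace--Length bound), and $d$ is bounded by a constant times $\log V$. The paper implements the second ingredient as Proposition \ref{prop:degreebounds}, proved from Prasad's volume formula together with the Belolipetsky and Belolipetsky--Emery estimates for maximal arithmetic subgroups, precisely so that no assumption that $\pi_1(N)$ lies in a principal arithmetic subgroup is needed and so that the hypothesis can be hyperbolic volume rather than arithmetic measure; your citation of the classical Borel/Chinburg--Friedman/Belolipetsky degree bounds plays the same role, and your comparison of $\mu_a$ with $\Vol$ is the paper's Remark \ref{rem:volumemeasure}. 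The paper then feeds the degree bound into the hyperbolic-specific Corollary \ref{cor:hyperbolic_inequality_d1d2_vol} rather than into Theorem \ref{thm:general}, but that difference is cosmetic.

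The genuine gap is in your bookkeeping of the exponent. Theorem \ref{cor:mainhyperbolic} concerns the hyperbolic metric, whereas Theorem \ref{thm:general} and the $1/(N(N^2-1))$ scaling you quote are relative to the subspace metric $g$ coming from the immersion. By Corollary \ref{cor:degree} the induced metric on the hyperbolic piece has constant curvature $-\frac{1}{4(n-1)d}$, so renormalizing to curvature $-1$ multiplies lengths by $\bigl(4(n-1)d\bigr)^{-1/2}$: the virtual immersion \emph{does} introduce an additional factor of $d^{1/2}$, contrary to your closing claim that the check produces no extra $\log V$ factors. This is visible in the paper's own Corollary \ref{cor:hyperbolic_inequality_d1d2_vol}, whose constant scales like $d^{-7/2}$, not $d^{-3}$; run exactly as you describe, your argument yields a bound of the shape $(\log V)^{-7/2}$ rather than the claimed $(\log V)^{-3}$ (and the same care is needed to reconcile Corollary \ref{cor:hyperbolic_inequality_d1d2_vol} with the exponent in the statement). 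To actually land at exponent $3$ you must either state the inequality in the subspace metric, or sharpen the index estimate by bounding $\abs{N_{p^m}:N}$ by $\abs{\mathrm{G}(\mathcal{O}_k/p^m\mathcal{O}_k)}\le (p^m)^{d\dim\mathrm{G}}$, as in the proof of Theorem \ref{thm:standardhyperbolic}, instead of using $\abs{\SL_{d_1d_2}(\Z/p^m\Z)}\le (p^m)^{d_1^2d_2^2-1}$; this brings the $d$-exponent down to $5/2$ and hence below $3$. As written, the step you call routine is exactly where the claimed constant is not yet justified.
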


In Section \ref{section:proofsbc}, we show how to explicitly compute the constants $c_1$ from the previous two theorems.  
Observe that the multiplicative constants in Theorems \ref{thm:speciallinear} and \ref{thm:standardhyperbolic} depend only on dimension, while the multiplicative constants in Theorems \ref{thm:general} and \ref{cor:mainhyperbolic} depend on dimension and volume.

%

%




\begin{rmk}
All of the results in this paper hold as well for simple locally symmetric \textit{orbifolds}. Note that in the context of locally symmetric orbifolds a closed geodesic is not defined to be locally length minimizing but rather to be a closed curve that lifts to a closed geodesic in a finite-sheeted manifold cover.
\end{rmk}
\begin{ack}The second and third authors were partially supported by the U.S. National Science Foundation grants DMS 1107452, 1107263, 1107367 ``RNMS: Geometric Structures and Representation Varieties'' (the GEAR Network). The second author is partially supported by a Simons Collaboration Grant.
\end{ack}

\section{Preliminaries on Metrics, Lengths, Volumes, and Towers}\label{section:prelims}

%
In this paper we assume some familiarity with Riemannian manifolds, Lie groups, Lie algebras, and symmetric spaces.  For a detailed reference on these topics, we refer the reader to \cite{H}.  We now record a few facts and establish terminology that we use throughout this paper and which enable us to discuss a general theory of systole growth up towers of covers.


%
%
Let $(M,g)$ be a finite volume Riemannian manifold and $c_1,c_2\in \R$ be constants such that
\begin{align}\label{eq:systolevolumebound}
\sys (M,  g)\ge c_1\log (\Vol(M,  g))-c_2.
\end{align}
Such a systole-volume bound behaves nicely when scaling the metric or lifting to covers:

\begin{lem}\label{lem:fundamental1}${}$
\begin{enumerate}
\item If $\alpha \in \R_{>0}$, then $\sys (M, \alpha g)\ge c'_1\log (\Vol(M, \alpha g))-c'_2$
where 
\begin{align*}
c'_1= \sqrt{\alpha}c_1 &&c'_2=\sqrt{\alpha}\left(c_2+\frac{c_1\dim M}{2}\log \alpha\right).
\end{align*}
\item If $M_I\to M$ is an $s$-sheeted cover, then $\sys (M_I)\ge c_1\log (\Vol(M_I))-c'_2.$
where 
\begin{align*}c_2'=c_2+c_1\log s.
\end{align*}
\end{enumerate}
\end{lem}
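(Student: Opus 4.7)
The plan is to handle the two parts of the lemma separately, each by tracking how conformal rescaling and covering maps change systoles and volumes.

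For part~(1), the key observation is that scaling the metric by $\alpha > 0$ multiplies every length by $\sqrt{\alpha}$ and the volume form by $\alpha^{n/2}$, where $n=\dim M$. Since the class of non-contractible loops (equivalently, closed geodesics) does not depend on the choice of metric, we obtain the two identities $\sys(M,\alpha g) = \sqrt{\alpha}\,\sys(M, g)$ and $\Vol(M,\alpha g) = \alpha^{n/2}\Vol(M, g)$. I would substitute these into the hypothesis $\sys(M,g) \ge c_1 \log \Vol(M,g) - c_2$: multiplying through by $\sqrt{\alpha}$ and rewriting $\log\Vol(M,g) = \log\Vol(M,\alpha g) - \tfrac{n}{2}\log\alpha$ produces an extra additive term $-\sqrt{\alpha}\, c_1 \cdot \tfrac{n}{2}\log\alpha$, which is absorbed into the constant to give exactly the stated values of $c_1'$ and $c_2'$.

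For part~(2), the cover $M_I\to M$ is a local isometry, so $\Vol(M_I)=s\,\Vol(M)$. The crucial inequality is $\sys(M_I) \ge \sys(M)$: any non-contractible loop in $M_I$ pushes forward to a loop of the same length in $M$ which is still non-contractible, because the induced map $\pi_1(M_I)\to\pi_1(M)$ is injective for any covering; in the noncompact setting, the same bound holds with systole defined as the minimal length of a closed geodesic, since a closed geodesic in $M_I$ projects to one of equal length in $M$. Chaining these observations with the hypothesis yields
\[
 \sys(M_I) \ge \sys(M) \ge c_1 \log \Vol(M) - c_2 = c_1 \log \Vol(M_I) - (c_2 + c_1 \log s),
\]
which is the desired bound with $c_2' = c_2 + c_1 \log s$.

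I do not anticipate any serious obstacle: both parts reduce to bookkeeping once the correct scaling identities are in hand. The only step meriting attention is the inequality $\sys(M_I) \ge \sys(M)$ used in part~(2), which rests on the injectivity of the fundamental group map induced by a covering together with the fact that covering maps are local isometries.
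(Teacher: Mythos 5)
Your proposal is correct and follows essentially the same route as the paper: substitute the scaling identities $\sys(M,\alpha g)=\sqrt{\alpha}\,\sys(M,g)$, $\Vol(M,\alpha g)=\alpha^{\dim M/2}\Vol(M,g)$ and $\Vol(M_I)=s\Vol(M)$ into the hypothesis and absorb the extra terms into the constants. The only difference is that you make explicit the inequality $\sys(M_I)\ge \sys(M)$ (via injectivity of $\pi_1(M_I)\to\pi_1(M)$ and the covering being a local isometry), which the paper's proof uses implicitly.
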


\begin{proof}
Scaling the metric scales the corresponding systole and volume (see \cite[Chapter 1]{docarmo}):
\begin{align}\label{eq:scaling}
\sys(M,\alpha g)=\sqrt{\alpha}\sys(M,g)&&
\Vol(M,\alpha g)=\alpha^{\frac{\dim M}{2}}\Vol(M,g).
\end{align}
An $s$-sheeted cover satisfies $\Vol(M_I)=s\Vol(M)$.
The results follow by substituting these values into \eqref{eq:systolevolumebound}.
\end{proof}
%
%

A \textit{tower of covers} $\{M_I\}_{I\in \mathcal{S}}$ of $M$ is a set of finite sheeted covers of $M$ indexed by a poset $\mathcal{S}$ such that if $I<J$, then $M_J$ covers $M_I$.  
If the systole of each manifold in the tower satisfies a logarithmic volume lower bound as in \eqref{eq:systolevolumebound} with the same constants $c_1$ and $c_2$, then we say the \textit{systole growth is at least logarithmic in volume} up the tower. In other words, the systole growth up a tower of covers $\{M_I\}_{I\in \mathcal{S}}$ of $M$ is at least logarithmic in volume if there are constants $c_1$ and $c_2$ which depend only on $M$ such that $\sys (M_I)\ge c_1\log (\Vol(M_I))-c_2$ for all manifolds $M_I$ in the tower.

Two locally symmetric spaces $M$ and $M'$ are \textit{commensurable} if they share a common finite sheeted cover.  Equivalently, if $\Gamma$ and $\Gamma'$ are their corresponding lattices, then $\Gamma\cap \Gamma'$ has finite index in $\Gamma$ and $\Gamma'$.  If $\{M_I\}_{I\in \mathcal{S}}$ is a tower of covers of $M$, then we define the associated \textit{induced tower} $\{M'_I\}_{I\in \mathcal{S}}$ of $M'$ where, for each $I\in\mathcal{S}$, $M'_I$ is the finite sheeted cover of $M_I$ of degree $\abs{M'_I:M_I}\le \abs{\Gamma:\Gamma\cap\Gamma'}$ associated to the lattice $\pi_1(M_I)\cap \Gamma'$. 

We define two towers $\{M_I\}_{I\in \mathcal{S}}$ and $\{M'_I\}_{I\in \mathcal{S}}$ to be \textit{commensurable}, if there exists a tower $\{M''_I\}_{I\in \mathcal{S}}$ where for each $I\in \mathcal{S}$, $M''_I$ is a common finite sheeted cover of $M_I$ and $M'_I$.  We define commensurable towers to be of \textit{bounded distance} if there exists an integer $s\ge1$ such that for each $I\in \mathcal{S}$, the covering maps $M''_I\to M_I$ and $M''_I\to M'_I$ are of no more than $s$ sheets.   Commensurable of bounded distance is an equivalence relation between towers.  Induced towers are of bounded distance.

It follows from Lemma \ref{lem:fundamental1} (ii) that if the systole growth up $\{M_I\}_{I\in \mathcal{S}}$ is at least logarithmic in volume, and $\{M'_I\}_{I\in \mathcal{S}}$ is commensurable of bounded distance to $\{M_I\}_{I\in \mathcal{S}}$, then the systole growth up $\{M'_I\}_{I\in \mathcal{S}}$ is also at least logarithmic in volume, and furthermore, they have the same multiplicative constant.  
We record these observations in the proposition below.

\begin{prop}\label{prop:fundamental2}
Let $M$ be a finite volume locally symmetric space and $\{M_I\}_{I\in \mathcal{S}}$ be a tower of covers of $M$.  
If systole growth is at least logarithmic in volume up the tower, then:
\begin{enumerate}
\item this property is independent of the scaling of the metric; 
\item the systole growth is at least logarithmic in volume up a commensurable tower of bounded distance; and
\item upon fixing the metric, the multiplicative constant $c_1$ is an invariant of the bounded distance commensurability class of $\{M_I\}_{I\in \mathcal{S}}$.
\end{enumerate}
\end{prop}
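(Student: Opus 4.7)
My plan is to deduce all three parts directly from Lemma \ref{lem:fundamental1}, combined with the definitions of commensurable-of-bounded-distance and induced towers. The three statements express different facets of the same underlying stability: the logarithmic-in-volume lower bound on systole is well-behaved under the natural operations on towers.

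For (i), I would apply Lemma \ref{lem:fundamental1}(i) uniformly to every $M_I$ in the tower. The rescaled constants $c'_1 = \sqrt{\alpha}\,c_1$ and $c'_2 = \sqrt{\alpha}(c_2 + c_1\dim(M)\log\alpha/2)$ depend only on $\alpha$, $\dim M$, and the original constants, never on $I$. Hence the rescaled tower still satisfies a uniform logarithmic bound, so ``at least logarithmic in volume'' is a property of the equivalence class of the metric rather than of the specific normalization.

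For (ii), let $\{M''_I\}$ be the shared cover tower guaranteed by bounded-distance commensurability, with each covering $M''_I \to M_I$ and $M''_I \to M'_I$ of at most $s$ sheets. Applying Lemma \ref{lem:fundamental1}(ii) with base $M_I$ and cover $M''_I$ yields the uniform bound $\sys(M''_I) \ge c_1\log \Vol(M''_I) - (c_2 + c_1\log s)$. To transfer this down to $\{M'_I\}$, I would use a standard ``take a power'' argument: a shortest non-contractible loop in $M'_I$ is realized by a semisimple $g \in \pi_1(M'_I)$ (the relevant lattices being torsion-free), and some power $g^k$ with $k \le [\pi_1(M'_I) : \pi_1(M''_I)] \le s$ lies in $\pi_1(M''_I)$; since $\ell(g^k) = k\,\ell(g)$, this forces $\sys(M'_I) \ge \sys(M''_I)/s$. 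Coupled with $\Vol(M'_I) \ge \Vol(M''_I)/s$, this yields a uniform logarithmic lower bound on $\{M'_I\}$. For (iii), I would then observe that the transfer in (ii) is symmetric in $\{M_I\}$ and $\{M'_I\}$, and the constants that arise depend only on data intrinsic to the commensurability class.

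The main obstacle is a subtle bookkeeping point in (iii): the naive ``take a power'' transfer can degrade the multiplicative constant by a factor of $1/s$, so preserving $c_1$ as a single number requires more care. I would handle this either by refining the transfer --- exploiting the fact that in the arithmetic settings of Theorems \ref{thm:speciallinear}--\ref{thm:general} the shortest semisimple element of $\pi_1(M'_I)$ tends to lie in $\pi_1(M''_I)$ for sufficiently deep $I$, so no power need be taken --- or by interpreting ``the multiplicative constant'' as the supremum of all admissible $c_1$'s, which is clearly a function only of the commensurability class even when any specific realization differs by the bounded factor $s$.
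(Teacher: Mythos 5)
Your parts (i) and (ii) are correct and essentially track the paper, which disposes of the whole proposition with one sentence citing Lemma \ref{lem:fundamental1}: for (i) you apply Lemma \ref{lem:fundamental1}(i) uniformly in $I$ (the new constants depend only on $\alpha$, $\dim M$, $c_1$, $c_2$), and for (ii) the upward transfer to the common-cover tower is exactly Lemma \ref{lem:fundamental1}(ii), while your power-of-$g$ descent from $M''_I$ to $M'_I$ (giving $\sys(M'_I)\ge \sys(M''_I)/s$ and hence a uniform bound with constant $c_1/s$) is a legitimate, and in fact more explicit, way to finish than anything the paper writes down. (Small point: in the noncompact case phrase the descent using the paper's convention that the systole is the least length of a closed geodesic, which is what your appeal to semisimple elements and $\ell(g^k)=k\,\ell(g)$ amounts to.)

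The genuine gap is in (iii), and neither of your two patches closes it. The first imports an unproved arithmetic assertion (that the shortest semisimple element of $\pi_1(M'_I)$ eventually lies in $\pi_1(M''_I)$) into a proposition stated for arbitrary towers over an arbitrary finite-volume locally symmetric space, and you give no reason it holds even in the settings of Theorems \ref{thm:speciallinear}--\ref{thm:general}. The second is circular: your transfer only shows that an admissible constant $c_1$ for one tower yields $c_1/s$ for the other, so the suprema of admissible constants are a priori comparable only up to the factor $s$; declaring the supremum ``clearly'' an invariant of the class is asserting precisely what (iii) claims. The mechanism by which the paper preserves the constant is that its transfer is purely upward: if $M'_I\to M_I$ is a cover with at most $s$ sheets, then $\sys(M'_I)\ge \sys(M_I)$ while $\log \Vol(M'_I)\le \log \Vol(M_I)+\log s$, so by Lemma \ref{lem:fundamental1}(ii) the same $c_1$ works and only $c_2$ shifts by $c_1\log s$; and in every application where the constant matters (the commensurability step ending the proofs of Theorems \ref{thm:speciallinear} and \ref{thm:standardhyperbolic}), the comparison tower is an induced tower whose members \emph{cover} those of the original tower, so no descent occurs and $c_1$ is untouched. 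Your detour up to $M''_I$ and back down is what manufactures the $1/s$ loss; a faithful proof of (iii) in the sense the paper uses it transfers the bound upward from the covered tower, and to the extent that the fully symmetric statement for arbitrary bounded-distance commensurable towers needs more than this, that is a looseness in the proposition's wording which your two suggested fixes do not repair.
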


Sometimes it is more convenient or natural to work with the scaled measure $\mu_\beta=\beta \Vol$, $\beta\in \R_{>0}$, on $N$. 
For example, for compact hyperbolic $n$-manifolds, Gromov showed that there exists a constant $\beta:=\beta(n)$ such that  simplicial volume is $\beta$ times hyperbolic volume \cite[Theorem 6.2]{Thurston}.
In Section \ref{section:proofsbc}, we shall be considering the arithmetic measure of an arithmetic simple locally symmetric space, which is a scaling of the metric volume.
%
A direct computation shows that that if there exists constants $c_1$ and $c_2$ such that $\sys (N)\ge c_1\log (\Vol(N))-c_2$, then  $\sys (N)\ge c_1\log (\mu_\beta(N))-c_2'$ for $c'_2=(c_2-c_1\log\beta)$. 
In such a case, it follows that systole growth up a tower is at least logarithmic in metric volume if and only if systole growth is at least logarithmic in measure (c.f. \eqref{eq:ksv}).

%

The symmetric space $\SL_n(\R) / \SO(n)$ comes naturally equipped with two Riemannian metrics: (1) the \textit{Killing metric} determined by the Killing form $B( X,Y)=2n\tr(XY)$ on $\mathfrak{sl}_n(\R)$ and (2) the \textit{geometric metric}, in which the hyperbolic slices corresponding to the natural inclusions $\SL_2(\R)\to \SL_n(\R)$ have constant sectional curvature of $-1$.  
These metrics are constant multiples of one another.
Relative to the scaled Killing form $\alpha B$,  $\alpha\in \R_{>0}$, the curvature of a section determined by orthonormal vectors $X,Y\in \mathfrak{sl}_n(\R)$ is $K(X,Y)=\frac{2n}{\alpha}\tr([X,Y]^2)$ \cite[V.3.1]{H}.  It follows that the geometric metric is determined by 
\begin{align}\label{eq:geometricform}
\langle X,Y\rangle :=\frac{1}{n}B(X,Y)=2\tr(XY), \hspace{2pc}X,Y\in \mathfrak{sl}_n(\R).
\end{align}

Many papers on locally symmetric spaces use (for instance \cite{PR}) the Killing metric, however since we are interested in isometric immersions of locally symmetric spaces of smaller dimensions, it will be more convenient to normalize to the geometric metric.

Each $x\in \SL_n(\R)$ has Jordan decomposition $x=x_sx_u$ where $x_s$ is \textit{semisimple} and  $x_u$ is \textit{unipotent}.  
Semisimple elements in $\SL_n(\R)$ are diagonalizable (possibly over $\C$).
When $\Gamma$ is a cocompact lattice in $\SL_n(\R)$, the Godement Compactness Criterion implies that it only has semisimple elements \cite[Theorem 5.3.3]{Witte}.
Every semisimple element has a polar decomposition $x=x_hx_e$ where its \textit{hyperbolic part} $x_h$ has all positive real eigenvalues and its \textit{elliptic part} $x_e$ has eigenvalues that lie on the unit circle.
In particular, if $\{a_1,\ldots, a_n\}$ are the eigenvalues of $x$, then $\{\abs{a_1},\ldots, \abs{a_n}\}$ are the eigenvalues of $x_h$.
Unless stated otherwise, in what follows $x\in \SL_n(\R)$ will denote a semisimple element and $\{a_1, \ldots a_n\}$ its eigenvalues.

Each $x$ stabilizes and translates along a geodesic axis in $\SL_n(\R) / \SO(n)$.
Let $\ell(x)$ denote the \textit{translation length} of $x$ relative to the geometric metric on $\SL_n(\R) / \SO(n)$.
Closed geodesics in $\Gamma \backslash \SL_n(\R) / \SO(n)$ are in bijective correspondence with $\Gamma$-conjugacy classes of semisimple elements in $\Gamma$.  
The length of a closed geodesic associated to the class of $x$ is the translation length of $x$.

Let $\mathrm{A}\subset \SL_n(\R)$ denote the Lie subgroup of diagonal matrices with positive entries and let $\mathfrak{a}$ denote its Lie algebra. 
The map $\log:\mathrm{A}\to \mathfrak{a}\subset \mathfrak{sl}_n(\R)$, sending $(b_1,\ldots , b_n)\mapsto (\log(b_1), \ldots, \log(b_n))$ is an isomorphism.
Then $y=\diag(\abs{a_1},\ldots, \abs{a_n}))\in \mathrm{A}$ is $\SL_n(\R)$-conjugate to $x_h$. Let $Y=\log(y)$.
Using \eqref{eq:geometricform}, (c.f. \cite[Section 12.1]{MaR}, \cite[Prop. 8.5]{PR})
\begin{align}\label{eq:geometriclength}
\ell(x)=\ell(x_h)=\ell(y)=\sqrt{\langle Y,Y\rangle}=\sqrt{2\tr(Y^2)}= \sqrt{2\sum_{i=1}^n\left(\log \abs{a_i}\right)^2}.
\end{align}


\section{Trace-Length Bounds Theorem }

In this section we state and prove a fundamental relationship between the traces and translation lengths of elements $x\in \SL_n(\R)$.
These relationships are particularly valuable since they enable us to leverage number theoretic techniques to analyze traces, thereby giving us geometric data about lengths.  
The proof of the theorem uses a variety of analytic techniques.

\begin{thm}[Trace--Length Bounds]\label{thm:tracelengthbounds}
For $x\in \SL_n(\R)$ semisimple,
\begin{align}\label{hyperbolictracelengthbounds}
\sqrt{2}\arccosh\left(\frac{\tr (x_h)}{n} \right) \leq \ell(x) \le 
\sqrt{2n}\arccosh\left(\left(\frac{\tr (x_h)}{n}\right)^{n-1} \right).
 \end{align}
 

Furthermore, if $|\tr(x)|\geq1$, then
\begin{align}\label{tracelengthbounds}
\sqrt{2}\arccosh\left(\max\left\{1,\frac{\abs{\tr (x)}}{n}\right\} \right) \leq \ell(x) \le 
\sqrt{2n}\arccosh\left(\left(2\sum_{l=1}^{n} |\tr(x^l)|\right)^{n-1} \right).
 \end{align}

\end{thm}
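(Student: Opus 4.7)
The plan is to reduce both inequalities to one-variable convexity estimates on the log-eigenvalues. Since $x$ is semisimple, its polar part $x_h$ is conjugate to $y=\diag(|a_1|,\dots,|a_n|)$; setting $t_i=\log|a_i|$, we have $\sum t_i=0$ (as $\prod|a_i|=|\det x|=1$), $\tr(x_h)=\sum e^{t_i}$, and $\ell(x)=\sqrt{2\sum t_i^2}$ by \eqref{eq:geometriclength}. Writing $T=\tr(x_h)/n$, the two halves of \eqref{hyperbolictracelengthbounds} are equivalent, via monotonicity of $\arccosh$, to the analytic statements
\begin{equation*}
T \le \cosh\Bigl(\sqrt{\textstyle\sum t_i^2}\Bigr) \qquad\text{and}\qquad \cosh\Bigl(\sqrt{\textstyle\sum t_i^2/n}\Bigr) \le T^{n-1},
\end{equation*}
subject to the constraints $\sum t_i=0$ and $\prod e^{t_i}=1$.

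For the lower bound I would exploit convexity of $e^t$: with $s=\sqrt{\sum t_j^2}$, the chord joining $(-s,e^{-s})$ and $(s,e^{s})$ lies above the graph, so $e^t \le \cosh(s)+(\sinh(s)/s)\,t$ whenever $|t|\le s$. Since $t_i^2\le s^2$ for each $i$, summing this pointwise bound and invoking $\sum t_i=0$ yields $\sum e^{t_i}\le n\cosh(s)$ at once.

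For the upper bound I would first verify that $g(u)=\cosh(\sqrt u)$ is convex on $[0,\infty)$ (a short calculus check, reducing to $v\cosh v\ge \sinh v$ for $v\ge 0$), and apply Jensen to get $\cosh(\sqrt{\sum t_i^2/n})\le\tfrac{1}{n}\sum\cosh(t_i)=\tfrac{1}{2}(T+T')$ with $T'=\tfrac{1}{n}\sum e^{-t_i}$. Then Maclaurin's inequality applied to the positive reals $e^{t_1},\dots,e^{t_n}$ (whose product is $1$, so that the $(n{-}1)$st elementary symmetric polynomial equals $\sum e^{-t_i}$) gives $T'\le T^{n-1}$; since $T\ge 1$ by AM--GM, we conclude $\tfrac{1}{2}(T+T')\le T^{n-1}$, completing \eqref{hyperbolictracelengthbounds}.

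To upgrade to \eqref{tracelengthbounds}, the lower bound is immediate from $|\tr(x)|=|\sum a_i|\le\sum|a_i|=\tr(x_h)$ and the monotonicity of $\arccosh$. By the same monotonicity, the upper bound reduces to the estimate $\tr(x_h)/n\le 2\sum_{l=1}^n|\tr(x^l)|$, and this is where the hypothesis $|\tr(x)|\ge 1$ enters. I would bound $\max_i|a_i|$ by Fujiwara's root estimate $\max_i|a_i|\le 2\max_k|e_k|^{1/k}$ (valid since $|e_n|=|\det x|=1$), and bound the elementary symmetric polynomials of $x$ by the power sums $p_l=\tr(x^l)$ via Newton's identities $ke_k=\sum_{j=1}^k(-1)^{j-1}e_{k-j}p_j$: setting $Q=\sum_{l=1}^n|p_l|$, induction on $k$ gives $|e_k|\le Q^k$, closing precisely because $Q\ge|p_1|\ge 1$. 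Combining the two yields $\max_i|a_i|\le 2Q$, whence $\tr(x_h)/n\le 2Q$. The main obstacle is this last paragraph: Jensen and Maclaurin both lean on the two constraints $\sum t_i=0$ and $\prod e^{t_i}=1$, and translating the $\tr(x_h)$-bound into a $\tr(x^l)$-bound requires combining Fujiwara with the Newton induction in a way that exploits $|\tr(x)|\ge 1$, so the estimate remains linear, rather than exponential, in $Q$.
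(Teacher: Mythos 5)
Your proposal is correct and follows essentially the same route as the paper: reduce to the log-eigenvalues via $\ell(x)=\sqrt{2\sum(\log|a_i|)^2}$, prove the lower bound by showing $\tr(x_h)/n\le\cosh\bigl(\sqrt{\sum(\log|a_i|)^2}\bigr)$, prove the upper bound by a Jensen-type convexity estimate combined with Maclaurin's inequality to get $\bigl(\tr(x_h)/n\bigr)^{n-1}$, and handle the trace version with the Newton's-identities induction (using $|\tr(x)|\ge1$) plus Fujiwara's root bound. Your secant-line argument and direct convexity of $\cosh\sqrt{u}$ are only cosmetic variants of the paper's Taylor-series computations in Proposition \ref{ineq} and Lemma \ref{betabound}; the substance is identical.
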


\begin{rem}
When $n=2$, it is known that for $x$ hyperbolic with eigenvalues $a$ and $\frac{1}{a}$,
\begin{align}\label{tracelengthn2}
\ell(x) = 2 \log \abs{a} = 2 \arccosh \left(\frac{\abs{\tr (x)}}{2}\right).
\end{align}
Observe that \eqref{hyperbolictracelengthbounds} gives a tight upper bound.
For $n\ge 3$, no such direct equality is known.
\end{rem}

\begin{rem}In \eqref{tracelengthbounds} the condition that $\abs{\tr(x)}\geq 1$ is necessary to apply Proposition \ref{contfun} to get the upper bound on $\ell(x)$.\end{rem} 

The remainder of this section is dedicated to proving Theorem \ref{thm:tracelengthbounds}. The lower bounds follow from \eqref{eq:geometriclength} and 
Proposition \ref{ineq} (below), while Propositions \ref{ineq2} and \ref{contfun} (below) along with \eqref{eq:geometriclength} yield the upper bounds.
Recall that for $x\in \SL_n(\R)$ semisimple, its eigenvalues $a_1, \ldots a_n$ are complex numbers that satisfy $\sum_{i=1}^n a_i= \tr x$ and $\prod_{i=1}^n a_i= 1$.

\begin{lem}\label{jensen} For any $\displaystyle{\{a_i\}_{i=1}^n}\subset\C$ and $\beta\in\R$, if $\displaystyle\prod_{i=1}^n a_i=1$, then
$\displaystyle\sum_{i=1}^n |a_i|^\beta\geq n$.
\end{lem}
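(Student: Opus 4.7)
The plan is to reduce the statement to a one-line application of the arithmetic-geometric mean inequality (equivalently, Jensen's inequality, as the lemma's label suggests). First I would observe that the hypothesis $\prod_{i=1}^n a_i = 1$ immediately forces $\prod_{i=1}^n |a_i| = \bigl|\prod_{i=1}^n a_i\bigr| = 1$, so in particular each $a_i$ is nonzero and the quantities $|a_i|^\beta$ are well-defined positive reals.

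Next I would set $r_i = |a_i|^\beta > 0$ and compute the geometric mean
\[
\Bigl(\prod_{i=1}^n r_i\Bigr)^{1/n} = \Bigl(\prod_{i=1}^n |a_i|\Bigr)^{\beta/n} = 1^{\beta/n} = 1.
\]
By AM-GM applied to the $r_i$,
\[
\frac{1}{n}\sum_{i=1}^n |a_i|^\beta = \frac{1}{n}\sum_{i=1}^n r_i \;\ge\; \Bigl(\prod_{i=1}^n r_i\Bigr)^{1/n} = 1,
\]
and multiplying through by $n$ gives the claim.

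Equivalently, and matching the lemma's name, one can phrase this via Jensen's inequality: let $t_i = \log|a_i| \in \mathbb{R}$, so that $\sum_{i=1}^n t_i = 0$, and apply the convex function $t \mapsto e^{\beta t}$ to get
\[
\frac{1}{n}\sum_{i=1}^n e^{\beta t_i} \;\ge\; \exp\!\Bigl(\beta \cdot \tfrac{1}{n}\sum_{i=1}^n t_i\Bigr) = e^0 = 1.
\]
There is no real obstacle here; the only thing to be mildly careful about is the trivial case $\beta = 0$, where the sum equals $n$ exactly and equality holds. The inequality is sharp, with equality iff all $|a_i|$ are equal (necessarily to $1$).
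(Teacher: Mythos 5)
Your proof is correct and is essentially the paper's own argument: the authors also apply the AM--GM inequality to the quantities $|a_i|^\beta$, using $\prod_i |a_i| = \bigl|\prod_i a_i\bigr| = 1$. The Jensen reformulation you add is an equivalent phrasing and does not change the substance.
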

\begin{proof}
This is simply an application of the arithmetic and geometric means inequality:
$$\frac{1}{n}\sum_{i=1}^n \abs{a_i^\beta}\geq\left(\prod_{i=1}^n \abs{a_i^\beta}\right)^\frac{1}{n}=\abs{\prod_{i=1}^n a_i}^\frac{\beta}{n} =1.$$ 
\end{proof}

\begin{prop} \label{ineq}For any $\displaystyle\{a_i\}_{i=1}^n\subset\C$ satisfying $\displaystyle\prod_{i=1}^n a_i=1$, we have:
$$\arccosh\left(\max\left\{1,\frac{1}{n}\abs{\sum_{i=1}^n a_i}\right\}\right)\leq\arccosh\left(\frac{1}{n}\sum_{i=1}^n |a_i|\right)\leq \sqrt{\sum_ {i=1}^n\left(\log\abs{a_i}\right)^2}.$$
\end{prop}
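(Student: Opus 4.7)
The plan is to split the chain into its two inequalities and handle each separately, both by first stripping off the monotone function $\arccosh$ on $[1,\infty)$.

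For the left inequality, since $\arccosh$ is increasing on $[1,\infty)$, it suffices to show
\[
\max\bigl\{1,\tfrac{1}{n}\abs{\textstyle\sum_i a_i}\bigr\} \;\leq\; \tfrac{1}{n}\textstyle\sum_i\abs{a_i}.
\]
The triangle inequality handles the term $\tfrac{1}{n}\abs{\sum a_i}$, while Lemma \ref{jensen} applied with $\beta=1$ (which uses the hypothesis $\prod a_i = 1$) handles the constant $1$. Both bounds are immediate.

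For the right inequality, I set $r_i = \abs{a_i}$ and $\lambda_i = \log r_i$, so that the product constraint gives $\sum_i \lambda_i = 0$, and every $\lambda_i$ lies in $[-L,L]$ where $L := \bigl(\sum_i \lambda_i^2\bigr)^{1/2}$. Applying $\cosh$ to both sides, the desired bound becomes $\tfrac{1}{n}\sum_i e^{\lambda_i} \leq \cosh(L)$. The main step will be to exploit convexity of the exponential on $[-L,L]$: the secant line joining $(-L,e^{-L})$ to $(L,e^L)$ lies above the graph, yielding the pointwise upper bound
\[
e^{\lambda_i} \;\leq\; \cosh(L) + \frac{\sinh(L)}{L}\,\lambda_i
\]
for each $i$. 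Summing over $i$ and invoking $\sum_i \lambda_i = 0$ collapses the linear correction, leaving $\sum_i r_i \leq n\cosh(L)$. Applying $\arccosh$ (legitimate since $\tfrac{1}{n}\sum r_i \geq 1$ by Lemma \ref{jensen}) completes the argument.

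The only real obstacle is conceptual: recognizing that this particular convex upper bound on $e^x$ is tailored to the problem, because its linear correction is annihilated precisely by the product constraint $\prod a_i = 1$. The degenerate case $L=0$ forces $\abs{a_i}=1$ for all $i$ and reduces the claim to $0\leq 0$, so it can be set aside from the start.
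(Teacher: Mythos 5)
Your proof is correct, and for the right-hand inequality it takes a genuinely different route from the paper. The paper proves $\frac{1}{n}\sum_i\abs{a_i}\le\cosh\bigl(\sqrt{\sum_i(\log\abs{a_i})^2}\bigr)$ by writing $\abs{a_i}=2\cosh(\log\abs{a_i})-\abs{a_i}^{-1}$, expanding $\cosh$ as a Taylor series, bounding the power sums via $\sum_i(\log\abs{a_i})^{2m}\le\bigl(\sum_i(\log\abs{a_i})^2\bigr)^m$, and using Lemma \ref{jensen} to absorb the leftover term $n-\frac{1}{2}\sum_i\abs{a_i}^{-1}\le\frac{n}{2}$; the constraint $\prod_i a_i=1$ enters only through that AM--GM step. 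You instead observe that the constraint forces $\sum_i\log\abs{a_i}=0$ and exploit convexity of $e^x$ on $[-L,L]$ (with $L=\sqrt{\sum_i\lambda_i^2}$, which indeed dominates each $\abs{\lambda_i}$): the chord bound $e^{\lambda_i}\le\cosh(L)+\frac{\sinh(L)}{L}\lambda_i$ sums to exactly $n\cosh(L)$ because the linear term vanishes. This is shorter and more transparent than the paper's series manipulation, and it isolates cleanly how the determinant-one hypothesis is used; it also sidesteps the paper's implicit reliance on $n\ge 2$ when comparing the tail of the series to $\frac{n}{2}$ times the tail. Your handling of the left-hand inequality (triangle inequality plus Lemma \ref{jensen} and monotonicity of $\arccosh$ on $[1,\infty)$) coincides with the paper's, and your separate treatment of the degenerate case $L=0$, where the chord slope is undefined, closes the only potential gap.
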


Note that in Proposition \ref{ineq}, we can replace each $a_i$ with $a_i^{-1}$ in the two inequalities on the left and get the identical bound on the right.

\begin{proof} We will begin by proving an inequality for the sum inside of $\arccosh$.

\begin{align}
\frac{1}{n}\abs{\sum_{i=1}^n a_i}
&\leq
\frac{1}{n}\sum_{i=1}^n \abs{a_i}=
%
%
\frac{2}{n}\left(\sum_{i=1}^n \frac{1}{2}\left(e^{\log\abs{a_i}}+e^{-\log\abs{a_i}}\right)-\frac{1}{2}\sum_{i=1}^n\abs{a_i}^{-1}\right) \notag\\
&=\frac{2}{n}\left(\sum_{i=1}^n \cosh(\log\abs{a_i})-\frac{1}{2}\sum_{i=1}^n\abs{a_i}^{-1}\right)\notag \\ 
%
%
&=\frac{2}{n}\left(n+\sum_{m=1}^\infty\left(\sum_{i=1}^n \dfrac{(\log\abs{a_i})^{2m}}{(2m)!}\right)-\frac{1}{2}\sum_{i=1}^n\abs{a_i}^{-1}\right)\label{TS}\\
%
%
&\leq\frac{2}{n}\left(\frac{n}{2}+\sum_{m=1}^\infty\left(\dfrac{\Big(\sum_i (\log\abs{a_i})^2\Big)^{m}}{(2m)!}\right)\right) \label{lem} \\
&\leq\frac{2}{n}\left(\frac{n}{2}\sum_{m=0}^\infty\left(\dfrac{\left(\sqrt{\sum_i (\log\abs{a_i})^2}\right)^{2m}}{(2m)!}\right)\right) \notag\\
&=\cosh\left(\sqrt{\sum_{i=1}^n(\log\abs{a_i})^2} \right)  \label{cosh}  
\end{align}
Equations \eqref{TS} and \eqref{cosh} follow from the Taylor Series expansion of $\cosh(x)=\displaystyle\sum_{m=0}^\infty\dfrac{x^{2m}}{(2m)!}$, while equation \eqref{lem} makes use of Lemma \ref{jensen} to get $n-\frac{1}{2}\sum_i\abs{a_i}^{-1}\leq\frac{n}{2}$. 
Since $\arccosh(x)$ is increasing on $[1,\infty)$ and $\sum_i\abs{a_i}\geq n$ by Lemma \ref{jensen}, we have: 
$$\arccosh\left(\max\left\{1,\frac{1}{n}\abs{\sum_{i=1}^n a_i}\right\}\right)
\leq\arccosh\left(\frac{1}{n}\sum_{i=1}^n |a_i|\right)
\leq\sqrt{\sum_{i=1}^n(\log|a_i|)^2}.$$
\end{proof}

\begin{lemma}\label{betabound}For any $\displaystyle\{a_i\}_{i=1}^n\subset\C$ satisfying $\displaystyle\prod_{i=1}^n a_i=1$ and any $\beta>0$ we have: 
$$\cosh\left(\beta\sqrt{\frac{1}{n}\sum_{i=1}^n\left(\log|a_i|\right)^2}\right)\leq\frac{1}{2n}\sum_i  \left(|a_i|^\beta+|a_i^{-1}|^{\beta}\right)$$
\end{lemma}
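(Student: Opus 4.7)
The plan is to make the substitution $x_i := \log|a_i|$, so that $|a_i|^\beta + |a_i^{-1}|^\beta = e^{\beta x_i} + e^{-\beta x_i} = 2\cosh(\beta x_i)$. Under this change of variables, the asserted inequality becomes
\begin{align*}
\cosh\left(\beta\sqrt{\tfrac{1}{n}\sum_{i=1}^n x_i^2}\right) \;\leq\; \frac{1}{n}\sum_{i=1}^n \cosh(\beta x_i).
\end{align*}
Notice that the hypothesis $\prod a_i = 1$ implies $\sum x_i = 0$, but I do not expect to need this; the inequality should hold for arbitrary real $x_i$.

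Next, I would expand both sides of the reduced inequality using the Taylor series $\cosh(y) = \sum_{m \geq 0} y^{2m}/(2m)!$. The left side becomes
\begin{align*}
\sum_{m=0}^\infty \frac{\beta^{2m}}{(2m)!}\left(\frac{1}{n}\sum_{i=1}^n x_i^2\right)^{m},
\end{align*}
while the right side becomes
\begin{align*}
\sum_{m=0}^\infty \frac{\beta^{2m}}{(2m)!}\left(\frac{1}{n}\sum_{i=1}^n x_i^{2m}\right).
\end{align*}
Since every coefficient $\beta^{2m}/(2m)!$ is positive, it suffices to prove termwise that
\begin{align*}
\left(\frac{1}{n}\sum_{i=1}^n x_i^2\right)^m \;\leq\; \frac{1}{n}\sum_{i=1}^n x_i^{2m} \qquad \text{for every integer } m \geq 0.
\end{align*}

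The final step is a direct application of Jensen's inequality: setting $y_i := x_i^2 \geq 0$, the function $t \mapsto t^m$ is convex on $[0,\infty)$ for $m \geq 1$ (and the inequality is a trivial equality for $m=0$), so
\begin{align*}
\left(\frac{1}{n}\sum_{i=1}^n y_i\right)^m \;\leq\; \frac{1}{n}\sum_{i=1}^n y_i^m,
\end{align*}
which is exactly what we need. I do not anticipate any real obstacle here; the only subtlety is recognizing that after moving to the variables $x_i = \log|a_i|$, the product-to-sum structure of $\cosh$ and the positivity of the Taylor coefficients reduce the problem to a standard convexity inequality applied coefficient-by-coefficient.
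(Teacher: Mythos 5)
Your proof is correct and follows essentially the same route as the paper: expand $\cosh$ in its Taylor series, compare the two sides term by term, and use Jensen's inequality (convexity of $t\mapsto t^m$ on $[0,\infty)$ applied to $y_i=\log^2|a_i|$) to get $\bigl(\tfrac1n\sum_i x_i^2\bigr)^m\le\tfrac1n\sum_i x_i^{2m}$, then resum. Your observation that the hypothesis $\prod_i a_i=1$ is not actually needed is accurate; the paper's argument does not use it either.
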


\begin{proof}
\begin{align}
\cosh\left(\beta\sqrt{\frac{1}{n}\sum_{i=1}^n\left(\log|a_i|\right)^2}\right)
&= \sum_{m=0}^\infty\frac{1}{(2m)!}\left(\frac{1}{n}\sum_{i=1}^n\left(\beta\log|a_i|\right)^2\right)^{m} \label{TS2}\\
&\leq\sum_{m=0}^\infty\frac{1}{(2m)!}\left(\frac{1}{n}\sum_{i=1}^n\left(\beta\log|a_i|\right)^{2m}\right) \label{jen} \\
&=\frac{1}{n}\sum_{i=1}^n \cosh\left(\log|a_i|^{\alpha\sqrt{n^{-1}}}\right)\notag \\
&=\frac{1}{2n}\sum_{i=1}^n  \left(|a_i|^\beta+|a_i^{-1}|^{\beta}\right) \label{beta}
\end{align} 
In \eqref{TS2} we used the Taylor Series expansion of $\cosh(x)$. 
In \eqref{jen} we used that, for $m=0,1$ it is trivially true and for $m\geq 2$, $f(x)=x^{m}$ is convex when $x>0$ so we can apply Jensen's inequality: $f\left(\frac{\sum x_i}{n}\right)\leq\frac{\sum f(x_i)}{n}$.  
\end{proof}

\begin{prop} \label{ineq2}For any $\displaystyle\{a_i\}_{i=1}^n\subset\C$ satisfying $\displaystyle\prod_{i=1}^n a_i=1$ and any $\beta>0$ we have: 
 $$\sqrt{\sum_ {i=1}^n\left(\log\abs{a_i}\right)^2}\leq \frac{\sqrt{n}}{\beta}\arccosh\left( \left(\frac{1}{n}\sum_ {i=1}^n |a_i|^\beta\right)^{n-1}\right).$$
\end{prop}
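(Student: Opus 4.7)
The plan is to combine Lemma \ref{betabound} with an asymmetric upper bound of the form
\begin{equation*}
\frac{1}{2n}\sum_{i=1}^{n}\bigl(|a_i|^\beta + |a_i|^{-\beta}\bigr) \;\leq\; \left(\frac{1}{n}\sum_{i=1}^{n}|a_i|^\beta\right)^{n-1}.
\end{equation*}
Once this estimate is in hand, both sides of Lemma \ref{betabound} lie in $[1,\infty)$, so applying the monotone function $\arccosh$, dividing by $\beta$, and multiplying by $\sqrt{n}$ yields exactly the conclusion of Proposition \ref{ineq2}.

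To establish the asymmetric bound, the key observation is that the hypothesis $\prod_i a_i = 1$ forces $\prod_i |a_i|^\beta = 1$. Setting $b_i := |a_i|^\beta > 0$, the elementary symmetric polynomials in the $b_i$ satisfy
\begin{equation*}
e_1(b_1,\dots,b_n) = \sum_i|a_i|^\beta, \qquad e_{n-1}(b_1,\dots,b_n) = \sum_i\prod_{j\neq i}b_j = \sum_i b_i^{-1} = \sum_i |a_i|^{-\beta},
\end{equation*}
the middle equality coming from $\prod_j b_j = 1$. Maclaurin's inequality for positive reals then delivers the asymmetric comparison
\begin{equation*}
\frac{1}{n}\sum_i|a_i|^{-\beta} = \frac{e_{n-1}}{\binom{n}{n-1}} \;\leq\; \left(\frac{e_1}{\binom{n}{1}}\right)^{n-1} = \left(\frac{1}{n}\sum_i|a_i|^\beta\right)^{n-1}.
\end{equation*}

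Finally, write $s := \frac{1}{n}\sum_i|a_i|^\beta$. Lemma \ref{jensen} applied with exponent $\beta$ gives $s \geq 1$, hence $s \leq s^{n-1}$ whenever $n \geq 2$; the degenerate $n=1$ case is immediate since $\prod a_i = 1$ forces $|a_1|=1$ and both sides of Proposition \ref{ineq2} vanish. Averaging $s \leq s^{n-1}$ with the Maclaurin bound produces the desired estimate, completing the reduction. I expect the only delicate point to be identifying Maclaurin's inequality as the right tool to control $\sum|a_i|^{-\beta}$ by a power of $\sum|a_i|^\beta$ under the normalization $\prod|a_i| = 1$; everything else is bookkeeping around the Taylor expansion already packaged in Lemma \ref{betabound}.
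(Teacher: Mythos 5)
Your argument is correct and follows essentially the same route as the paper: both start from Lemma \ref{betabound}, control $\sum_i |a_i|^{-\beta}$ via Maclaurin's inequality using $\prod_i |a_i|^\beta = 1$, use Lemma \ref{jensen} (i.e.\ $s\geq 1$, so $s\leq s^{n-1}$) to absorb the $\sum_i|a_i|^\beta$ term, and finish by applying $\arccosh$. The only cosmetic difference is that the paper writes the two bounds with the factor $\frac{1}{n^{n-2}}\left(\sum_i|a_i|^\beta\right)^{n-1}$ rather than your averaged form, which is the same estimate.
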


\begin{proof}First we want to bound $\frac{1}{2n} \sum_i \left(|a_i|^\beta+|a_i^{-1}|^{\beta}\right)$ from Lemma \ref{betabound}.  Note that for any $i$, $ a_i^{-1}=\prod_{j\neq i} a_j$.  So $\sum_i a_i^{-1}=\sum_i \prod_{j\neq i} a_j$, which is an elementary symmetric polynomial that arises as the coefficient of the linear term in the characteristic polynomial $\prod (x-a_j)$. Similarly, $\sum_i |a_i|^{-\beta}=\sum_i \prod_{j\neq i} |a_j|^\beta$.  We can use this and Maclaurin's Inequality \cite{BC14} to bound $\sum_i|a_i|^{-\beta}$ above by $\sum_i|a_i|^\beta$.  In particular,
$$\frac{1}{\binom{n}{1}}\sum_{i=1}^n |a_i|^\beta\geq \left(\frac{1}{\binom{n}{n-1}}\sum_{i=1}^n \prod_{j\neq i} |a_j|^\beta\right)^\frac{1}{n-1}\geq \left(\prod_{i=1}^n |a_i|^\beta\right)^\frac{1}{n}=1.$$
Simplifying this,
$$\frac{1}{n}\sum_{i=1}^n |a_i|^\beta\geq \left(\frac{1}{n}\sum_{i=1}^n  |a_i|^{-\beta}\right)^\frac{1}{n-1}\geq 1$$
so
$$\frac{1}{n^{n-2}} \left(\sum_{i=1}^n  |a_i|^\beta\right)^{n-1} \geq  \sum_{i=1}^n  |a_i^{-1}|^{\beta} \geq n.$$

We use this to bound inequality \eqref{beta} as follows:
\begin{align*}
\frac{1}{2n} \sum_{i=1}^n  \left(|a_i|^\beta+|a_i^{-1}|^\beta\right)
&\leq 
\frac{1}{2n}  \left[\sum_{i=1}^n |a_i|^\beta+\frac{1}{n^{n-2}} \left(\sum_{i=1}^n  |a_i|^\beta\right)^{n-1} \right] \\
&\leq \frac{1}{2n}  \left[ \frac{2}{n^{n-2}} \left(\sum_{i=1}^n  |a_i|^\beta\right)^{n-1} \right] \\
&= \left(\frac{1}{n}\sum_{i=1}^n  |a_i|^\beta\right)^{n-1}, 
\end{align*}
where, in the last inequality, we use that $\frac{1}{n}\sum_i  |a_i|^\beta\geq 1$ by Lemma \ref{jensen} and so $\left(\frac{1}{n}\sum_i  |a_i|^\beta\right)^{n-1} \geq \frac{1}{n}\sum_i  |a_i|^\beta$.  Hence \eqref{TS2} is bounded above as follows:
$$\cosh\left(\beta\sqrt{\frac{1}{n}\sum_{i=1}^n\left(\log|a_i|\right)^2}\right)\leq  \left(\frac{1}{n}\sum_{i=1}^n |a_i|^\beta\right)^{n-1},$$

which we can re-write to get the desired result:
$$\sqrt{\sum_i\left(\log\abs{a_i}\right)^2}\leq \frac{\sqrt{n}}{\beta}\arccosh\left( \left(\frac{1}{n}\sum_i |a_i|^\beta\right)^{n-1}\right)$$
since $\arccosh(x)$ is increasing on $[1,\infty)$.  
\end{proof}

For $x\in \SL_n(\R)$, let 
\begin{align}\label{charpoly}
p_x(X)=X^n - s_1(x)X^{n-1}+ s_2(x)X^{n-2} -\cdots + (-1)^{n-1}s_{n-1}(x)X+(-1)^{n}
\end{align}
be the characteristic polynomial of $x$ where $s_j(x)$ denotes the $j^{th}$ elementary symmetric polynomial in the eigenvalues of $x$ (e.g. $\tr (x) = s_1(x)$ and $\det(x)=s_n(x)=1$).
Newton's identities  \cite{Kalman} state that for all $1\le j\le n$, 
\begin{align}\label{newton}
js_j(x) &=  s_{j-1}(x)\tr(x)-s_{j-2}(x)\tr(x^2)+\cdots +  (-1)^{j-2}s_{1}(x)\tr(x^{j-1}) + (-1)^{j-1}\tr(x^j)
%
\end{align}
In particular, these recursively show that each $s_j(x)$ can be written as a linear combination of $\{ \tr(x), \tr(x^2), \ldots \tr(x^j)\}$.  
Fujiwara's bound \cite{Fujiwara,Marden}, applied to our context states that if $\lambda$ is a root of the characteristic polynomial \eqref{charpoly},
then
\begin{align}\label{fujiwara}
\abs{\lambda}\le 2\max\left\{\abs{s_1(x)}, \abs{s_2(x)}^{\frac{1}{2}}, \ldots, \abs{s_{n-1}(x)}^{\frac{1}{n-1}}, 2^{-\frac{1}{n}}\right\}.
\end{align} 
Relationships \eqref{newton} and \eqref{fujiwara} are used in the proof of the following proposition.


\begin{prop}\label{contfun} Assume $\abs{\tr(x)}\geq1$. 
For each $n$, there exists a continuous function $F_n:\R^n\to \R$ such that 
$$\tr(x_h)\le F_n(\tr(x), \tr(x^2), \ldots, \tr(x^n)).$$
One such continuous function is: $F_n(z_1,\ldots,z_n)=2n\sum_{j=1}^n |z_j|.$
\end{prop}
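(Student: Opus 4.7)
The plan is to combine Fujiwara's root bound \eqref{fujiwara} with Newton's identities \eqref{newton} to control $\tr(x_h)=\sum_i |a_i|$ purely in terms of the power sums $\tr(x^l)$, $1\le l\le n$. Continuity of the resulting expression will be immediate since $F_n(z_1,\ldots,z_n)=2n\sum_j |z_j|$ is a sum of absolute values.

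First I would observe that $\tr(x_h)=\sum_{i=1}^n |a_i|\le n\cdot \max_i |a_i|$, and since each $a_i$ is a root of the characteristic polynomial \eqref{charpoly}, Fujiwara's inequality \eqref{fujiwara} gives
\[
\tr(x_h)\le 2n\cdot \max\!\left\{|s_1(x)|,\,|s_2(x)|^{1/2},\ldots,|s_{n-1}(x)|^{1/(n-1)},\,2^{-1/n}\right\}.
\]
Setting $T_l:=|\tr(x^l)|$ and $S:=\sum_{l=1}^n T_l$, the hypothesis $|\tr(x)|\ge 1$ forces $S\ge T_1\ge 1$, a fact I will use repeatedly.

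The technical heart of the proof is the inductive claim that $|s_j(x)|\le S^j$ for every $1\le j\le n$. I would prove this by induction on $j$: the base case $j=1$ is just $|s_1(x)|=T_1\le S$. For the inductive step, Newton's identity \eqref{newton} gives
\[
|s_j(x)|\le \frac{1}{j}\sum_{l=1}^{j} |s_{j-l}(x)|\,T_l \le \frac{1}{j}\sum_{l=1}^{j} S^{j-l}\,T_l,
\]
using the convention $|s_0|=1$ together with the inductive hypothesis. Since $S\ge 1$ and $l\ge 1$, each factor $S^{j-l}$ is bounded by $S^{j-1}$, and since $\sum_{l=1}^j T_l\le S$, we get $|s_j(x)|\le \frac{1}{j} S^{j-1}\cdot S \le S^j$, closing the induction.

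Substituting back, $|s_j(x)|^{1/j}\le S$ for all $j$, and because $S\ge 1\ge 2^{-1/n}$ the maximum in Fujiwara's bound is at most $S$. Therefore
\[
\tr(x_h)\le 2nS = 2n\sum_{l=1}^n |\tr(x^l)|,
\]
which is exactly $F_n(\tr(x),\tr(x^2),\ldots,\tr(x^n))$ for the claimed continuous function. The main obstacle is the inductive bound $|s_j|\le S^j$: this is where both Newton's identities and the normalizing hypothesis $|\tr(x)|\ge 1$ (ensuring $S\ge 1$ so that powers of $S$ can be absorbed monotonically) must be used simultaneously.
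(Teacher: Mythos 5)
Your proof is correct and follows essentially the same route as the paper: an induction on Newton's identities to bound the elementary symmetric functions $|s_j(x)|$ by powers of the trace sum (using $|\tr(x)|\ge 1$ so that the sum is at least $1$), fed into Fujiwara's root bound and the estimate $\tr(x_h)\le n\max_i|a_i|$. Your inductive claim $|s_j(x)|\le S^j$ is a slightly cleaner variant of the paper's $j|s_j(x)|\le\bigl(\sum_{l=1}^j|\tr(x^l)|\bigr)^j$, but the argument is the same in substance.
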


\begin{proof} Claim: For $1\leq k\leq n-1$, $k|s_k(x)|\leq \left(\sum_{l=1}^k |\tr(x^l)|\right)^k$. \\ This is true for $k=1$ since $|s_1(x)|=|\tr(x)|$.  Suppose it is true for $1\leq k\leq j-1<n-1$.  Then, using Newton's Identities, for $k=j$:

\begin{align*}
j|s_j(x)|
& \leq |s_{j-1}(x)\tr(x)|+|s_{j-2}(x)\tr(x^2)|+\cdots +  |s_{1}(x)\tr(x^{j-1})| 
+ |\tr(x^j)| \\
& \leq \frac{|\tr(x)|}{j-1}\left(\sum_{l=1}^{j-1} |\tr(x^l)|\right)^{j-1}+
 \frac{|\tr(x^2)|}{j-2}\left(\sum_{l=1}^{j-2} |\tr(x^l)|\right)^{j-2}+\cdots+ |\tr(x^{j-1})\tr(x)|
 +|\tr(x^j)| \\
 &\leq \left(\sum_{l=1}^{j-1} |\tr(x^l)|\right)^{j-1}\left(\frac{|\tr(x)|}{j-1}+\frac{|\tr(x^2)|}{j-2}+\cdots+ |\tr(x^{j-1})|+|\tr(x^j)|\right) \\
 & \leq \left(\sum_{l=1}^{j} |\tr(x^l)|\right)^{j},
\end{align*}
where we use the assumption $\abs{\tr(x)}\geq1$ in the third inequality. This proves the claim.  
Combining this bound with \eqref{fujiwara} and using that $2^{-\frac{1}{n}}<1$ we get:
\begin{align*}
\tr(x_h)&\leq n\max_{1\leq i\leq n}|a_i|
\leq 2n \left(\sum_{l=1}^{n} |\tr(x^l)|\right) 
\end{align*}
\end{proof}

\section{Central Simple Algebras Over $\Q$ and Their Associated Orbifolds}\label{section:centralalgebralattices}

Let $A$ be a central simple algebra over $\Q$ of dimension $n^2\geq 4$. By Wedderburn's structure theorem there exists a positive integer $m$ and central division algebra $D$ over $\Q$ such that $A\cong \Mat_m(D)$. Therefore $n^2=m^2\dim_\Q(D)$.

Suppose now that $p$ is prime and consider the central simple algebra $A\otimes_\Q \Q_p \cong \Mat_m(D\otimes_\Q \Q_p)$ over $\Q_p$. This algebra also has dimension $n^2$ and, by Wedderburn's theorem, is isomorphic to $\Mat_{m_p}(D_p)$ for some positive integer $m_p$ and central division algebra $D_p$ over $\Q_p$. If the dimension of $D_p$ is greater than $1$ (equivalently, $m_p<n$) then we say that $p$ {\it ramifies} in $A$. Otherwise $p$ is {\it unramified} in $A$.

Let $K$ be an extension field of $\Q$ for which there is an isomorphism of $K$-algebras \[h: A\otimes_\Q K\rightarrow \Mat_n(K).\] Given an element $x\in A\otimes_\Q K$ the characteristic polynomial of $h(x)$ is well-defined and does not depend on the isomorphism $h$. For an element $a\in A$, the reduced characteristic polynomial of $a$ is defined as the characteristic polynomial of $h(a\otimes 1)$ and is of the form \[X^n - \tr(a)X^{n-1} +\cdots + (-1)^n\nr(a).\] We call $\tr(a)$ the {\it reduced trace} of $a$ and $\nr(a)$ the {\it reduced norm} of $a$.

We now define orders in central simple algebras over $\Q$. Let $A$ be a finite dimensional central simple algebra over $\Q$. A {\it $\Z$-order} $\mathcal O$ of $A$ is a subring of $A$ which is also a finitely generated $\Z$-submodule of $A$ for which $\mathcal O\otimes_\Z \Q\cong A$. An order of $A$ is {\it maximal} if it is not properly contained in any other order of $A$. A fundamental result \cite[Theorem 8.6]{Reiner} is that if $\mathcal O$ is an order of $A$ then the reduced characteristic polynomial of an element of $\mathcal O$ lies in $\Z[X]$. In particular if $x\in\mathcal O$ then both the reduced trace $\tr(x)$ and the reduced norm $\nr(x)$ of $x$ are integers.

We now discuss the construction of locally symmetric orbifolds from maximal orders in central simple algebras. Let $A$ be a central simple algebra of dimension $n^2$ over $\Q$ for which $A\otimes_\Q \R\cong \Mat_n(\R)$, and $\mathcal O$ be a maximal order of $A$. Denote by $\mathcal O^1$ the multiplicative subgroup of $\mathcal O^\times$ consisting of those elements with reduced norm one and by $\Gamma$ the image of $\mathcal O^1$ in $\SL_n(\R)$. Defined in this manner, $\Gamma$ is a lattice in $\SL_n(\R)$ with finite covolume \cite{BorelHarishChandra} (see also \cite{Witte} and the references therein). Let $M_\Gamma=\Gamma\backslash \SL_n(\R) / \SO(n)$ be the associated special linear orbifold. 
This orbifold is a manifold if and only if $\Gamma$ is torsion-free and is compact if and only if $A$ is a division algebra.
We call any orbifold commensurable with $M_\Gamma$ \textit{a standard special linear orbifold of degree $n$}.

\begin{rem}
Not every lattice in $\SL_n(\R)$ arises from the aforementioned construction. In particular there exist lattices in $\SL_n(\R)$ that are not commensurable with the ones coming from central simple algebras (see \cite{T1}).  Nevertheless we are able to restrict our attention to the lattices arising from central simple algebras because they are universal in the sense that all other lattices virtually embed into them in a controlled way. This will be described in Section \ref{section:locallysymmetric}.
\end{rem}
\section{Trace Estimates in Congruence Subgroups}\label{section:congruencesubgroups}

Let $A$ be a central simple algebra over $\Q$ of dimension $n^2\geq 4$, $\mathcal O$ be a maximal order of $A$ and $\Gamma$ be the lattice in $\SL_n(\R)$ associated to the elements of $\mathcal O^1$. Given a natural number $N\geq 1$ we have an ideal $N\mathcal O$ of $\mathcal O$ whose quotient $\mathcal O/N\mathcal O$ is a finite ring. We define the {\it level $N$ principal congruence subgroup} of $\mathcal O^1$ to be the kernel of the homomorphism $\mathcal O^1\rightarrow \left(\mathcal O/N\mathcal O\right)^\times$ obtained from the natural projection $\mathcal O \rightarrow \mathcal O/N\mathcal O$. We will denote this group by $\mathcal O^1(N)$; that is, $\mathcal O^1(N)=\ker\left(\mathcal O^1\rightarrow \left(\mathcal O/N\mathcal O\right)^\times\right)$. The image in $\SL_n(\R)$ of $\mathcal O^1(N)$ will be denoted $\Gamma(N)$. 

We now apply Theorem \ref{thm:tracelengthbounds} to examine the growth of traces of elements of $\Gamma$ in congruence subgroups.

\begin{thm}\label{thm:largetrace}
Let $\Gamma\subset \SL_n(\R)$ be the lattice defined above and let $p$ be a prime which does not ramify in $A$ and satisfies $p>2n$. 
For every $m\geq 1$ and semisimple $x\in \Gamma(p^m)$, $x\ne  1$, there is an integer $q$, $\abs{q}\le \frac{n}{2}$, such that $\abs{\tr(x^q)} > p^m-n$.
\end{thm}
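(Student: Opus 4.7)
The plan is to exploit two complementary facts: (i) the condition $x \in \mathcal{O}^1(p^m)$, combined with $\nr(x) = 1$, forces an unusually strong congruence $\tr(x^q) \equiv n \pmod{p^{2m}}$ for every nonzero integer $q$; and (ii) knowing $\tr(x^q)$ for $0 < |q| \leq n/2$, together with $\det(x) = 1$, is enough to reconstruct the characteristic polynomial of $x$.

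For (i), I would write $x = I + p^m y$ with $y \in \mathcal{O}$ and expand the reduced norm
$$\nr(x) = \prod_{i=1}^n(1 + p^m \lambda_i) = 1 + p^m s_1(y) + p^{2m} s_2(y) + \cdots + p^{mn} s_n(y),$$
where $\lambda_1, \ldots, \lambda_n$ are the eigenvalues of $y$. Setting this equal to $1$, dividing by $p^m$, and isolating $s_1(y)$ yields $\tr(y) = s_1(y) \in p^m\Z$. Binomially expanding gives
$$\tr(x^q) = n + qp^m\tr(y) + \binom{q}{2}p^{2m}\tr(y^2) + \cdots,$$
so the congruence $\tr(x^q) \equiv n \pmod{p^{2m}}$ follows for every $q \geq 1$. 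For negative powers the argument is identical since $\mathcal{O}^1(p^m)$ is a group and therefore contains $x^{-1}$, which may likewise be written as $I + p^m y'$ with $y' \in \mathcal{O}$.

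For (ii), I would argue by contradiction: if $\tr(x^q) = n$ for every nonzero integer $q$ with $|q| \leq n/2$, then Newton's identities \eqref{newton} applied to $x$ recursively produce $s_k(x) = \binom{n}{k}$ for $1 \leq k \leq \lfloor n/2 \rfloor$, while applying the same identities to $x^{-1}$ and using the duality $s_k(x^{-1}) = s_{n-k}(x)$ (a consequence of $\det(x)=1$) yields $s_k(x) = \binom{n}{k}$ for $\lceil n/2 \rceil \leq k \leq n-1$. With $s_n(x) = 1$, this forces the characteristic polynomial of $x$ to equal $(X-1)^n$; semisimplicity then forces $x = I$, contradicting $x \neq 1$. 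Combining (i) and (ii), some $q$ with $|q| \leq n/2$ satisfies $\tr(x^q) \neq n$, so $|\tr(x^q) - n|$ is a nonzero integer multiple of $p^{2m}$, giving
$$|\tr(x^q)| \geq p^{2m} - n > p^m - n$$
for every $m \geq 1$. The main subtlety, in my view, lies in (ii): verifying that the palindromic structure coming from $\det(x) = 1$ really does let one reconstruct the full characteristic polynomial from the short range $|q| \leq n/2$ rather than needing $|q| \leq n-1$. The enhancement from $\pmod{p^m}$ to $\pmod{p^{2m}}$ in (i) is what converts the weak bound $|\tr(x^q)| \geq p^m - n$ into the strict bound required by the theorem.
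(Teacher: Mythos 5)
Your proposal is correct, and its overall architecture is the same as the paper's: first a congruence for $\tr(x^q)$ with $x^q$ ranging over the congruence subgroup, then Newton's identities together with the duality $s_i(x)=s_{n-i}(x^{-1})$ (valid since $\nr(x)=1$) to show that $\tr(x^q)=n$ for all $0<\abs{q}\le n/2$ forces $p_x(X)=(X-1)^n$, whence $x=1$ by semisimplicity; your verification that the two ranges $1\le k\le \lfloor n/2\rfloor$ and $\lceil n/2\rceil\le k\le n-1$ cover all coefficients is exactly the "computation" the paper leaves implicit. The one genuine difference is in the congruence step: the paper simply reduces modulo $p^m$ in $\Mat_n(\Z_p)$ (using that $p$ is unramified and that reduced traces of elements of $\mathcal O$ are integers) to get $\tr(x^q)\equiv n \pmod{p^m}$, whereas you write $x=1+p^m y$ with $y\in\mathcal O$ and exploit $\nr(x)=1$ to deduce $p^m\mid \tr(y)$ and hence the stronger congruence $\tr(x^q)\equiv n\pmod{p^{2m}}$. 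This refinement is valid (all $s_j(y)$ and $\tr(y^j)$ are integers because $y\in\mathcal O$, and $x^{-1}\in\mathcal O^1(p^m)$ handles negative $q$), and it buys something concrete: you conclude $\abs{\tr(x^q)}\ge p^{2m}-n>p^m-n$, so the strict inequality in the statement comes for free, while the paper's mod-$p^m$ congruence alone only yields $\abs{\tr(x^q)}\ge p^m-n$ (with possible equality when $\tr(x^q)=n-p^m$). Since the downstream applications only use the lower bound inside $\arccosh$, this makes no difference to the rest of the paper, but your version is the cleaner route to the theorem as literally stated; note also that, like the paper's proof, your argument never actually needs the hypotheses $p>2n$ or $p$ unramified in $A$ (those are used later to control the covering degree).
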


\begin{proof}
Choose a basis of $A\otimes_\Q \Q_p$ so that $A\otimes_\Q \Q_p \cong \Mat_n(\Q_p)$ and $\mathcal O\otimes_{\Z} \Z_p \cong \Mat_n(\Z_p)$. Denote by $\varphi_p$ the natural projection $\varphi_p: \Mat_n(\Z_p)\rightarrow \Mat_n(\Z_p/p^m\Z_p)\cong \Mat_n(\Z/p^m\Z)$. Suppose that $x\in \Gamma(p^m)$. Identifying $x$ with its image in $\Mat_n(\Z_p)$, we have that $\varphi_p(x)=\mathrm{Id}_n$, hence $\tr(\varphi_p(x))=n$. Because $\mathcal O$ is a $\Z$-order of $A$ and $x\in \mathcal O$ we have that $\tr(x)\in\Z$, an observation which allows us to conclude that $\tr(x)\equiv n \pmod {p^m}$. This shows that if $x\in \Gamma(p^m)$ then $\tr(x)=p^mk+n$ for some $k\in\Z$.

Now suppose $x\in \Gamma(p^m)$ is semisimple  and $\tr(x^q)=n$ for each integer $q$, $\abs{q}\le \frac{n}{2}$.
Let $p_x(X)$
be the characteristic polynomial of $x$ as in \eqref{charpoly}.
Then by Newton's identities \eqref{newton}, and the fact that for $x\in \SL_n(\R)$  $s_i(x)=s_{n-i}(x^{-1})$, our assumptions on the traces of powers of $x$ uniquely determines each $s_j(x)$, and a computation shows that $p_x(X)=(X-1)^n$.  
Since $x$ is semisimple, we deduce that $x=1$.  
\end{proof}

\begin{cor}\label{cor:tracelengthcor}
Let $\Gamma$ be as above. For each semisimple $x\in \Gamma(p^m)$,  $x\ne  1$, 
\begin{align}\label{tracelengthbounds2}
\frac{2\sqrt{2}}{n}\arccosh\left(\frac{p^m-n}{n} \right) \leq \ell(x).
\end{align}
\end{cor}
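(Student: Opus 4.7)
The plan is to derive this corollary as a direct two-step composition of Theorem \ref{thm:largetrace} with the semisimple lower bound in Theorem \ref{thm:tracelengthbounds}, bridged by the fact that the translation length of a semisimple element scales linearly under taking powers.

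First, I would apply Theorem \ref{thm:largetrace} to produce an integer $q$ with $|q| \leq n/2$ and $|\tr(x^q)| > p^m - n$. A quick sanity check using $p > 2n$ shows $p^m - n > n$, which both rules out $q = 0$ (where $|\tr(x^0)| = n$) and guarantees $|\tr(x^q)|/n > 1$; the latter places us cleanly inside the domain where the lower bound of \eqref{tracelengthbounds} applies. Since $x^q$ is again semisimple, that bound together with monotonicity of $\arccosh$ on $[1,\infty)$ yields
$$\ell(x^q) \;\geq\; \sqrt{2}\,\arccosh\!\left(\frac{|\tr(x^q)|}{n}\right) \;>\; \sqrt{2}\,\arccosh\!\left(\frac{p^m - n}{n}\right).$$

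Next I would use that, by the explicit formula \eqref{eq:geometriclength}, $\ell$ is homogeneous of degree one in the vector of logarithms of absolute values of eigenvalues, so $\ell(x^q) = |q|\,\ell(x)$. Dividing and using $|q| \leq n/2$ gives
$$\ell(x) \;=\; \frac{\ell(x^q)}{|q|} \;\geq\; \frac{2}{n}\,\ell(x^q) \;\geq\; \frac{2\sqrt{2}}{n}\,\arccosh\!\left(\frac{p^m - n}{n}\right),$$
which is precisely the asserted bound.

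I do not anticipate any genuine obstacle here: the heavy lifting has already been carried out in Theorems \ref{thm:tracelengthbounds} and \ref{thm:largetrace}, and this corollary is essentially their composition. The only minor bookkeeping is confirming that $q \ne 0$ and that the trace threshold satisfies $|\tr(x^q)|/n \geq 1$ so that the semisimple lower bound of \eqref{tracelengthbounds} can be applied without the $\max\{\cdot\}$ alternative activating; both reductions follow from the standing hypothesis $p > 2n$.
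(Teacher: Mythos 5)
Your proposal is correct and follows essentially the same route as the paper: combine Theorem \ref{thm:largetrace} with the lower bound of Theorem \ref{thm:tracelengthbounds} applied to $x^q$, then use $\ell(x^q)=|q|\,\ell(x)$ (equivalently $\ell(x^{-1})=\ell(x)$ and $\ell(x^q)=q\ell(x)$ for $q>0$) together with $|q|\le n/2$. Your explicit checks that $q\ne 0$ and that $(p^m-n)/n>1$ are sound and merely spell out details the paper leaves implicit.
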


\begin{proof}
Observe that $\ell(x)=\ell(x^{-1})$, $s_i(x)=s_{n-i}(x^{-1})$, and $\ell(x^q)=q\ell(x)$.  
This together with the Trace-Length Bounds Theorem \ref{thm:tracelengthbounds} gives the result.
\end{proof}

\begin{rem}As only finitely many primes ramify in a central simple algebra, Theorem \ref{thm:largetrace} hold for all but finitely many primes $p$.
\end{rem}
\section{Proof of Theorem \ref{thm:speciallinear}}

We now prove Theorem \ref{thm:speciallinear}. Let $A$ be a central simple algebra over $\Q$ of dimension $n^2\geq 4$, $\mathcal O$ be a maximal order of $A$ and $\Gamma$ be the image in $\SL_n(\R)$ of the multiplicative group of elements of $\mathcal O$ of reduced norm one. Let $M=\Gamma\backslash \SL_n(\R)/\SO(n)$. Given a prime $p$ and positive integer $m$ we denote by $M_{p^m}$ the congruence cover of $M$ of level $p^m$, and by $\abs{M_{p^m}:M}$ the degree of $M_{p^m}$ over $M$.

An immediate application of Corollary \ref{cor:tracelengthcor} is that 
\begin{align*}
\sys(M_{p^m})\geq \frac{2\sqrt{2}}{n}\arccosh\left(\frac{p^m-n}{n} \right),
\end{align*}

which implies that
\begin{equation}\label{firstsystolebound}
\sys(M_{p^m})\geq \frac{2\sqrt{2}}{n}\log\left(\frac{p^m-n}{n} \right)
\end{equation}

because $\arccosh z = \log (z + \sqrt{z^2-1})$.

Let $S$ be the set of rational primes which either ramify in $A$ or else satisfy $p<2n$.
Observe that $S$ is a finite set.
By construction, for each $p\notin S$, we have 
 \[\abs{M_{p^m}:M}=\abs{\Gamma:\Gamma(p^m)}\le\abs{\SL_n(\Z/p^m\Z)}\leq(p^m)^{n^2-1}.\]

Substituting this into (\ref{firstsystolebound}) and simplifying yields, for all $p\notin S$,
\begin{align*}
\sys(M_{p^m})&\geq \frac{2\sqrt{2}}{n}\left(\log\left(\abs{M_{p^m}:M}^{1/(n^2-1)} \right)-\log(2n)\right)\\
&= \frac{2\sqrt{2}}{n}\left(\frac{1}{n^2-1}\log(\Vol(M_{p^m})/\Vol(M))-\log(2n)\right)\\
&= \frac{2\sqrt{2}}{n(n^2-1)}\log(\Vol(M_{p^m}))-c,
\end{align*}
where $c$ is a positive constant depending on $M$. This proves Theorem \ref{thm:speciallinear} in the case that our special linear manifold $M$ is of the form $\Gamma\backslash \SL_n(\R)/\SO(n)$ with $\Gamma$ arising from the units of norm one in a maximal order of a central simple algebra of dimension $n^2$ over $\Q$. By Proposition \ref{prop:fundamental2}, the general case follows from this special case however, since by definition every standard special linear manifold is commensurable with one of the manifolds considered above.

\section{Simple Locally Symmetric Orbifolds: Immersions and Towers}\label{section:locallysymmetric}

In the remainder of the paper, we assume familiarity with algebraic and arithmetic groups.
For detailed references on these topics, we refer the reader to \cite{Borel, Witte}.
Let $k$ denote a totally real number field, $\mathcal{O}_k$ its ring of integers, and let $\mathrm{G}$ be a connected, simple, semisimple, adjoint algebraic $k$-group that is anisotropic at all but one real place of $k$.
Fix once and for all the infinite embedding $k\subset \R$ for which $\mathrm{G}$ is isotropic.  
Then $\mathrm{G}(\R)$ is a simple Lie group (in the sense that the complexification of its Lie algebra is simple) and $\mathrm{G}(\mathcal{O}_k)\subset \mathrm{G}(\R)$ is a \textit{principal arithmetic} lattice.  
Let $\mathrm{K}\subset \mathrm{G}(\R)$ be a maximal compact subgroup.  
Then $\mathrm{G}(\mathcal{O}_k)\backslash \mathrm{G}(\R) / \mathrm{K}$ is a \textit{principal arithmetic simple locally symmetric orbifold}.

A \textit{simple locally symmetric orbifold} is a Riemannian orbifold of the form $\Gamma\backslash \mathcal{G} / \mathcal{K}$ where $\mathcal{G}$ is a connected simple Lie group, $\mathcal{K}$ is a maximal compact, and $\Gamma$ is a lattice.  
Examples of simple locally symmetric orbifolds include finite volume real, complex, and quaternionic hyperbolic manifolds, as well as special linear manifolds.  
Such orbifolds are \textit{arithmetic} when $M$ is commensurable with a principle arithmetic simple locally symmetric orbifold.  By the work of Margulis \cite{Mar} and Gromov--Schoen \cite{GS}, all simple locally symmetric orbifolds other than real and complex hyperbolic are arithmetic.  


\begin{thm}\label{thm:virtualimmersion}
Let $N=\Lambda\backslash \mathrm{G}(\R)/\mathrm{K}$ be an arithmetic simple locally symmetric orbifold and $k$ be its field of definition.  Then there exists an orbifold commensurable to $N$ that can be immersed as a totally geodesic suborbifold of a standard special linear orbifold of degree $[k:\Q] \cdot \dim \mathrm{G}$.  
\end{thm}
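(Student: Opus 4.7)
The plan is to use the adjoint representation of $\mathrm{G}$ combined with Weil restriction of scalars from $k$ to $\Q$ to realize an orbifold commensurable with $N$ as a totally geodesic suborbifold of $\SL_n(\Z)\backslash\SL_n(\R)/\SO(n)$, where $n=[k:\Q]\cdot\dim\mathrm{G}$. Since $N$ is arithmetic and the conclusion is only up to commensurability, I may assume $\Lambda=\mathrm{G}(\mathcal{O}_k)$ is the principal arithmetic lattice.

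First, since $\mathrm{G}$ is adjoint and semisimple, the adjoint representation $\Ad:\mathrm{G}\hookrightarrow\SL(\mathfrak{g})$ is a faithful $k$-morphism of $k$-groups: faithful because $Z(\mathrm{G})=1$, and landing in $\SL$ because semisimple groups admit no nontrivial rational characters, so $\det\circ\Ad\equiv 1$. Picking a $k$-basis of $\mathfrak{g}$ identifies $\SL(\mathfrak{g})$ with $\SL_d$ over $k$, where $d=\dim\mathrm{G}$. Next, fix a $\Q$-basis of $k$ of cardinality $r=[k:\Q]$; the identification of $k^d$ with $\Q^{rd}$ realizes $\Res_{k/\Q}(\SL_d)$ as a $\Q$-subgroup of $\SL_n$, $n=rd$, by sending $A\in\SL_d(k)$ to the $\Q$-linear endomorphism of $k^d$ it induces, which has determinant $\mathrm{N}_{k/\Q}(\det A)=1$. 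Composing with $\Ad$ yields a closed $\Q$-embedding
\[\varphi:\Res_{k/\Q}(\mathrm{G})\hookrightarrow\SL_n.\]
Since $\Res_{k/\Q}(\mathrm{G})(\Z)=\mathrm{G}(\mathcal{O}_k)$ and any two arithmetic subgroups of the $\Q$-group $\mathrm{H}:=\varphi(\Res_{k/\Q}(\mathrm{G}))$ are commensurable, $\varphi(\mathrm{G}(\mathcal{O}_k))$ is commensurable with $\mathrm{H}\cap\SL_n(\Z)$.

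On the symmetric-space side, $\Res_{k/\Q}(\mathrm{G})(\R)=\prod_{\sigma:k\hookrightarrow\R}\mathrm{G}^\sigma(\R)$, and by the anisotropy hypothesis all but the distinguished real place contribute compact factors whose symmetric space is a point; hence the symmetric space for $\mathrm{H}(\R)$ coincides with $\mathrm{G}(\R)/\mathrm{K}$. The main step, and the key obstacle, is to verify that the induced immersion $\mathrm{G}(\R)/\mathrm{K}\hookrightarrow\SL_n(\R)/\SO(n)$ is totally geodesic. The infinitesimal criterion is that $d\varphi$ should send a Cartan decomposition $\mathfrak{g}=\mathfrak{k}\oplus\mathfrak{p}$ into the standard Cartan decomposition $\mathfrak{sl}_n(\R)=\mathfrak{so}(n)\oplus\Sym_0(n)$; equivalently, $\Ad(\mathrm{K})\subset\SO(n)$. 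I would secure this by choosing the $k$-basis of $\mathfrak{g}$ to be orthonormal with respect to the $\theta$-twisted Killing form $-B(X,\theta Y)$ (with $\theta$ the Cartan involution at the distinguished place), and the $\Q$-basis of $k$ to be orthonormal with respect to the trace form on $k\otimes_\Q\R$. With such coordinates, $\Ad(\mathrm{K})$ preserves the resulting inner product on $\R^n$ and $d\varphi(\mathfrak{p})$ consists of trace-zero symmetric matrices, so its image is a Lie triple system and the immersion is totally geodesic. Since $\SL_n(\Z)\backslash\SL_n(\R)/\SO(n)$ is itself a standard special linear orbifold of degree $n$ --- $\SL_n(\Z)$ arising from the maximal order $\mathrm{M}_n(\Z)$ in the trivial central simple $\Q$-algebra $\mathrm{M}_n(\Q)$ of dimension $n^2$ --- this gives the desired immersion.
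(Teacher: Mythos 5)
Your construction is essentially the paper's own: Theorem \ref{thm:virtualimmersion} is deduced there from Proposition \ref{prop:speciallinearembeddings}, whose proof is exactly your chain --- the adjoint representation $\Ad:\mathrm{G}\to\SL_{d_1}$ (faithful since $\mathrm{G}$ is adjoint), restriction of scalars $k/\Q$ realized inside $\SL_{d_1d_2}$ via the regular representation, identification of $\mathrm{G}(\R)$ with the unique noncompact factor of $(\mathrm{R}_{k/\Q}\mathrm{G})(\R)$, and Borel--Harish-Chandra for commensurability of $\rho^{-1}(\rho(\mathrm{G}(\R))\cap\SL_{d_1d_2}(\Z))$ with $\Lambda$; the paper leaves the totally geodesic step implicit, so your attention to it is the only real addition. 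One caveat on how you close that step: you cannot in general pick a $k$-rational basis of $\mathfrak{g}$ that is exactly orthonormal for $-B(X,\theta Y)$, since the Cartan involution $\theta$ is an involution of $\mathfrak{g}\otimes_k\R$ at the distinguished place and need not be defined over $k$, and even for a $k$-rational form orthonormalization requires square roots that need not lie in $k$; so the requirement $\Ad(\mathrm{K})\subset\SO(n)$ in the coordinates that also define the $\Z$-structure may be unattainable. The standard repair costs nothing arithmetically: keep the rational ($\mathcal{O}_k$-lattice) basis, which is only needed for the $\Z$-structure up to commensurability, and instead move the basepoint of $\SL_n(\R)/\SO(n)$ (equivalently, replace $\SO(n)$ by a conjugate maximal compact meeting $\rho(\mathrm{G}(\R))$ compatibly with its Cartan involution); by the Karpelevich--Mostow theorem the reductive subgroup $\rho(\mathrm{G}(\R))$ has a totally geodesic orbit through some point, and since changing the basepoint does not move $\SL_n(\Z)$, the quotient of that orbit by the image of a finite-index subgroup of $\Lambda$ still immerses totally geodesically in the standard special linear orbifold. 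With that adjustment your argument is complete and coincides with the paper's.
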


Theorem \ref{thm:virtualimmersion} is a consequence of the following algebraic result.



\begin{prop}\label{prop:speciallinearembeddings}
Let $d_1=\dim \mathrm{G}$ and  $d_2=[k:\Q]$.
Then there exists a Lie group embedding $\rho:\mathrm{G}(\R)\to \SL_{d_1d_2}(\R)$ such that $\rho^{-1}(\rho(\mathrm{G}(\R))\cap \SL_{d_1d_2}(\Z))$ is commensurable with $\Lambda$.  
\end{prop}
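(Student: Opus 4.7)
My plan is to build $\rho$ via restriction of scalars and the adjoint representation. Set $\bfG := \Res_{k/\Q}\mathrm{G}$, a connected absolutely $\Q$-simple adjoint algebraic group of $\Q$-dimension $d_1 d_2$. Because $\mathrm{G}$ is $k$-anisotropic at all real places $\sigma\neq\sigma_0$, the real-point group decomposes as $\bfG(\R) \cong \mathrm{G}(\R)\times K$, where $K=\prod_{\sigma\neq\sigma_0}\mathrm{G}^\sigma(\R)$ is compact. By Borel--Harish-Chandra, $\bfG(\Z)\cong \mathrm{G}(\mathcal{O}_k)$ is an arithmetic lattice in $\bfG(\R)$; because $K$ is compact, its projection to the noncompact factor $\mathrm{G}(\R)$ is a lattice commensurable with $\Lambda$.

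Next, exploit that $\bfG$ is adjoint to obtain a faithful $\Q$-morphism via the adjoint representation on its Lie algebra. Choosing a $\Z$-basis of a natural $\mathcal{O}_k$-integral form of $\mathrm{Lie}(\mathrm{G})$ identifies this Lie algebra with a $\Q$-vector space of dimension $d_1 d_2$ equipped with a distinguished $\Z$-lattice, so that $\Ad_{\bfG}$ becomes a closed $\Q$-rational embedding $\Phi\colon\bfG\hookrightarrow\SL_{d_1 d_2}$ sending $\bfG(\Z)$ into $\SL_{d_1 d_2}(\Z)$. Define $\rho\colon\mathrm{G}(\R)\to\SL_{d_1 d_2}(\R)$ as the composition of the factor inclusion $\mathrm{G}(\R)\hookrightarrow\bfG(\R)$ with $\Phi$ on real points. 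That $\rho$ is a Lie group embedding is then immediate: the factor inclusion is injective with closed image (its complement $K$ is compact), and $\Phi$ is a closed algebraic immersion because $\bfG$ is adjoint.

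The heart of the argument is the commensurability
\[
\rho^{-1}\bigl(\rho(\mathrm{G}(\R))\cap\SL_{d_1 d_2}(\Z)\bigr)\;\sim_{\mathrm{com}}\;\Lambda.
\]
One inclusion follows from the identification $\bfG(\Z)\cong\mathrm{G}(\mathcal{O}_k)$ combined with $\Phi(\bfG(\Z))\subset\SL_{d_1 d_2}(\Z)$: projecting $\bfG(\Z)$ to the noncompact factor $\mathrm{G}(\R)$ yields, up to finite index, a subgroup of the preimage that is commensurable with $\Lambda$. For the reverse inclusion one uses the standard arithmeticity fact that $\Phi(\bfG(\R))\cap\SL_{d_1 d_2}(\Z)$ is commensurable with $\Phi(\bfG(\Z))$, so that any $g\in\mathrm{G}(\R)$ with $\rho(g)\in\SL_{d_1 d_2}(\Z)$ corresponds, up to finite index, to an element of $\bfG(\Z)$; projecting back to $\mathrm{G}(\R)$ yields at most $\Lambda$ up to finite index.

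I expect the main obstacle to be tracking how the \emph{factor inclusion} $\mathrm{G}(\R)\hookrightarrow\bfG(\R)$ interacts with the \emph{Galois-diagonal} realization of $\bfG(\Z)$ inside $\bfG(\R)$, since these two embeddings of $\mathrm{G}(\mathcal{O}_k)$-type data into $\bfG(\R)$ differ by the Galois action on the compact factors. Quantifying this discrepancy and showing that it contributes only a finite-index correction is what forces the conclusion to be commensurability with $\Lambda$ rather than strict equality, and it is the step that will require the most care.
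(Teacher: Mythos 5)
Your construction of $\rho$ is the same as the paper's (adjoint representation, restriction of scalars, $\Z$-structure from an $\mathcal{O}_k$-lattice $L\subset\mathfrak{g}$, and $\rho$ defined as the inclusion of the noncompact factor composed with the $\Q$-embedding $\Phi$ of $\Res_{k/\Q}(\mathrm{G})$), but the commensurability step --- exactly the point you flag at the end as needing care --- does not work as you propose, and the discrepancy you hope to absorb as a finite-index correction is in fact fatal for this choice of $\rho$. Your first ``inclusion up to finite index'' is false: for $\gamma\in(\Res_{k/\Q}\mathrm{G})(\Z)\cong\mathrm{G}(\mathcal{O}_k)$ the integral matrix is $\Phi(\gamma)$, which acts by $\Ad(\gamma^\sigma)$ on \emph{every} summand $\mathfrak{g}\otimes_{k,\sigma}\R$, whereas $\rho$ of its projection acts by $\Ad(\gamma)$ on the $\sigma_0$-summand and by the identity elsewhere; the two differ by the compact-factor blocks, which are not integral, so the projection of $(\Res_{k/\Q}\mathrm{G})(\Z)$ does not map into $\SL_{d_1d_2}(\Z)$ under $\rho$, even after passing to a finite-index subgroup. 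Worse, when $[k:\Q]\ge 2$ the group $\rho^{-1}\bigl(\rho(\mathrm{G}(\R))\cap\SL_{d_1d_2}(\Z)\bigr)$ is trivial: if $g\in\mathrm{G}(\R)$ preserves the $\Z$-lattice $L\subset\bigoplus_\sigma\mathfrak{g}\otimes_{k,\sigma}\R$ while acting trivially on all summands with $\sigma\neq\sigma_0$, then for $x\in L$ the vector $\rho(g)x$ agrees with $x$ in those components, and since an element of $L$ is determined by any single component (each map $\mathfrak{g}\to\mathfrak{g}\otimes_{k,\sigma}\R$ is injective), $\rho(g)x=x$; as the $\sigma_0$-components of $L$ span $\mathfrak{g}\otimes_{k,\sigma_0}\R$ and $\mathrm{G}$ is adjoint, $g=1$. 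So the Galois-diagonal copy of $\mathrm{G}(\mathcal{O}_k)$ and the factor copy of $\mathrm{G}(\R)$ meet only in the identity, and no bookkeeping of finite indices can close this gap.

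The statement your argument can actually deliver (and what the paper's citation of Borel--Harish-Chandra, Prop.\ 6.2 provides) lives one level up: $\Phi^{-1}\bigl(\Phi((\Res_{k/\Q}\mathrm{G})(\R))\cap\SL_{d_1d_2}(\Z)\bigr)$ is commensurable with $(\Res_{k/\Q}\mathrm{G})(\Z)\cong\mathrm{G}(\mathcal{O}_k)$, and one then passes to $N$ using that the factors at $\sigma\neq\sigma_0$ are compact, so the symmetric space of $(\Res_{k/\Q}\mathrm{G})(\R)$ is already $\mathrm{G}(\R)/\mathrm{K}$ and the arithmetic lattice acts on it through its \emph{projection} to $\mathrm{G}(\R)$, which is commensurable with $\Lambda$. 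That projection, not the intersection of $\SL_{d_1d_2}(\Z)$ with the factor $\rho(\mathrm{G}(\R))$, is the object that makes Theorem \ref{thm:virtualimmersion} and the induced congruence towers work. Note also that the intersection formulation cannot be rescued by a cleverer choice of $\rho$ when $[k:\Q]>1$: if some continuous embedding of $\mathrm{G}(\R)$ alone had integral points commensurable with $\Lambda$, then (Zariski closure over $\Q$ plus Borel density) $\Lambda$ would be commensurable with the $\Z$-points of a $\Q$-group whose real points are $\mathrm{G}(\R)$ with no compact factors, forcing field of definition $\Q$ and contradicting that the field of definition $k$ is an invariant of the commensurability class. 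So your proof should be restated in terms of $\Res_{k/\Q}(\mathrm{G})(\R)$ and the projection of its integral points, rather than attempting to verify the intersection statement literally.
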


\begin{proof}
Let $\mathfrak{g}$ denote the Lie algebra of $\mathrm{G}$. 
The adjoint representation $\Ad: \mathrm{G}\to \SL(\mathfrak{g})\cong \SL_{d_1}$ is $k$-rational \cite[3.13]{Borel} and since $\mathrm{G}$ is adjoint, it is injective.
Via restriction of scalars and the regular representation \cite[2.1.2]{PlatRap}, we get a sequence of $\Q$-rational injections:
$$\mathrm{R}_{k/\Q}(\mathrm{G})\to \mathrm{R}_{k/\Q}(\SL_{d_1})\to \SL_{d_1d_2}$$
Since $\mathrm{G}(\R)$ is naturally identified as the unique noncompact factor of  $\mathrm{R}_{k/\Q}(\mathrm{G})(\R)$, we obtain a sequence of Lie group injections:
$$\mathrm{G}(\R) \to (\mathrm{R}_{k/\Q}(\mathrm{G}))(\R) \to \SL_{d_1d_2}(\R)$$
whose composition we denote $\rho$.  
Choosing an $\mathcal{O}_k$-lattice $L\subset \mathfrak{g}$ determines a $\Z$-structure on $\SL_{d_1d_2}$ for which $\rho^{-1}(\rho(\mathrm{G}(\R))\cap \SL_{d_1d_2}(\Z))$ is commensurable with $\Lambda$ \cite[Prop. 6.2]{BorelHarishChandra}.
\end{proof}

We record $d_1=\dim \mathrm{G}$ as a function of absolute rank $r$ for each Killing--Cartan type below.
\begin{center}
\begin{tabular}{ r|*{9}{c}}
  \hline			
		& $A_r$ 	  & $B_r$ 	& $C_r$ 	&$D_r$	&$E_6$	&$ E_7$	& $E_8$	& $F_4$	& $G_2$\\
$d_1$	& $r^2+2r $&$2r^2+r$ 	& $2r^2+r$ 	&$2r^2-r$	& $78$	&$133$	&$248$	&$52$	&$14$\\
  \hline  
\end{tabular}
\end{center}

%



For each prime $p$, the $p$-congruence tower $\{M_{p^m}\}$ of $M=\SL_{d_1d_2}(\Z)\backslash \SL_{d_1d_2}(\R)/\SO(d_1d_2)$ induces a \textit{$p$-congruence tower} $\{N_{p^m}\}$ of $N$.

\begin{rem}\label{rem:natrualcongruence}
In Proposition \ref{prop:speciallinearembeddings}, the $\Q$-structure on $\SL_{d_1d_2}$ is canonical, but the $\Z$-structure requires a choice of an $\mathcal{O}_k$-lattice.
A different choice of $L$ would result in a commensurable cover \cite[Cor. 6.3]{BorelHarishChandra}.  
The two resulting induced towers over $N$ towers are commensurable of bounded distance, and hence by Proposition \ref{prop:fundamental2} the growth up an associated $p$-congruence tower is independent of the choice of $\mathcal{O}_k$-lattice.
Furthermore, since $(\mathrm{R}_{k/\Q}(\mathrm{G}))(\Z)\cong \mathrm{G}(\mathcal{O}_k)$ \cite[2.1.2]{PlatRap}, a $p$-congruence tower is of bounded distance from the tower associated to the principal congruence subgroups $\mathrm{ker}(\mathrm{G}(\mathcal{O}_k)\to \mathrm{G}(\mathcal{O}_k/p^m\mathcal{O}_k))$.
In this sense, up to bounded distance, $\{N_{p^m}\}$ is a natural tower only depending on $\mathrm{G}$ and $p$.
\end{rem}

\begin{rem}\label{rem:subspacekilling}
Endowing $N$  and its covers with the \textit{subspace metric}, $g$, enables us to use Theorem \ref{thm:speciallinear} to obtain coarse estimates for the growth of the systoles up the $\{N_{p^m}\}$.  
Observe that by the definition of the canonical embedding in Proposition \ref{prop:speciallinearembeddings}, the subspace metric is $2d_2$ times the Killing metric on $N$.
\end{rem}

\begin{cor}\label{cor:general_inequality_d1d2_vol}
Let $N$ be an arithmetic simple locally symmetric orbifold,  $k$ its field of definition and $\mathrm{G}$ its associated group.  
Let $d_1=\dim \mathrm{G}$ and  $d_2=[k:\Q]$.
There exists an explicit constant $c_1:=c_1(d_1, d_2)$ and constant $c_2:=c_2(N,g)$ such that for all primes $p>2d_1d_2$ and all positive integers $m$,
\begin{align}
 \sys(N_{p^m},g)\ge \frac{2\sqrt{2}}{d_1d_2(d_1^2d_2^2-1)} \log(\Vol(N_{p^m},g)) - c_2.
\end{align}
\end{cor}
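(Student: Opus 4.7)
The plan is to reduce the corollary to (the proof of) Theorem \ref{thm:speciallinear} by invoking the virtual embedding of Theorem \ref{thm:virtualimmersion}. The idea: every closed geodesic in $N_{p^m}$ lifts, via the totally geodesic immersion of $N$ into the standard special linear orbifold of degree $n = d_1d_2$, to a closed geodesic of the same length in the ambient congruence cover. A trace--length bound on semisimple elements in the level-$p^m$ principal congruence subgroup of $\SL_n(\Z)$ then yields a lower bound on $\sys(N_{p^m}, g)$, and the volume of $N_{p^m}$ is controlled by the index of that same congruence subgroup in $\SL_n(\Z)$.

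First, by Theorem \ref{thm:virtualimmersion} and Proposition \ref{prop:fundamental2}, I may replace $N$ by a commensurable orbifold whose lattice $\Lambda$ sits inside $\SL_n(\Z)$ via the embedding of Proposition \ref{prop:speciallinearembeddings}. Setting $\Gamma := \SL_n(\Z)$ with level-$p^m$ principal congruence subgroup $\Gamma(p^m)$, let $\Lambda(p^m) := \Lambda \cap \Gamma(p^m)$. By Remark \ref{rem:natrualcongruence}, the intrinsic $p$-congruence tower over $N$ is commensurable of bounded distance to $\{\Lambda(p^m) \backslash \mathrm{G}(\R)/\mathrm{K}\}_m$, so by Proposition \ref{prop:fundamental2} I may work with this latter tower without altering the multiplicative constant. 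Since the immersion $\mathrm{G}(\R)/\mathrm{K} \hookrightarrow \SL_n(\R)/\SO(n)$ is totally geodesic and isometric in the subspace metric, for any semisimple $\gamma \in \Lambda$ the translation length $\ell(\gamma)$ given by \eqref{eq:geometriclength} is the same whether $\gamma$ acts on $\mathrm{G}(\R)/\mathrm{K}$ or on $\SL_n(\R)/\SO(n)$.

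The ambient algebra $\Mat_n(\Q)$ has no ramified primes, so Corollary \ref{cor:tracelengthcor} applies to every nontrivial semisimple $\gamma \in \Lambda(p^m) \subset \Gamma(p^m)$ whenever $p > 2n$, giving
$$\sys(N_{p^m}, g) \geq \frac{2\sqrt{2}}{n}\arccosh\!\left(\frac{p^m - n}{n}\right) \geq \frac{2\sqrt{2}}{n}\log\!\left(\frac{p^m - n}{n}\right).$$
For the volume, $\abs{\Lambda : \Lambda(p^m)} \leq \abs{\Gamma : \Gamma(p^m)} \leq (p^m)^{n^2-1}$, whence $\Vol(N_{p^m}, g) \leq \Vol(N, g)\,(p^m)^{n^2-1}$. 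Solving for $p^m$ and absorbing the resulting $O(1)$ terms into a constant $c_2=c_2(N,g)$ produces the stated inequality with multiplicative constant $\frac{2\sqrt{2}}{n(n^2-1)} = \frac{2\sqrt{2}}{d_1d_2(d_1^2d_2^2 - 1)}$.

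The principal obstacle is matching the intrinsic $p$-congruence tower on $N$ with the one induced from the ambient $\SL_n(\Z)$-tower: without Remark \ref{rem:natrualcongruence} and Proposition \ref{prop:fundamental2} one would have to grapple with the difference between the arithmetic structure coming from $k$ and $\mathrm{G}$ and that coming from $\Q$ and $\SL_n$. Once that identification is in place the argument parallels Section 6, with the heavy lifting already carried out in the Trace--Length Bounds Theorem \ref{thm:tracelengthbounds}: the smaller-dimensional orbifold $N$ simply inherits the degree-$d_1d_2$ systole growth rate of the ambient special linear manifold.
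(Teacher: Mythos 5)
Your proposal is correct and is essentially the paper's own (largely implicit) argument: embed a commensurable representative of $N$ totally geodesically and isometrically into the degree-$d_1d_2$ standard special linear orbifold via Proposition \ref{prop:speciallinearembeddings}, apply Corollary \ref{cor:tracelengthcor} to semisimple elements of $\Lambda(p^m)\subset\Gamma(p^m)$ (valid for $p>2d_1d_2$ since $\Mat_{d_1d_2}(\Q)$ is unramified everywhere), bound $\abs{\Lambda:\Lambda(p^m)}$ by $(p^m)^{(d_1d_2)^2-1}$ to control $\Vol(N_{p^m},g)$, and use Proposition \ref{prop:fundamental2} together with Remark \ref{rem:natrualcongruence} to pass between commensurable towers of bounded distance without changing the multiplicative constant. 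This matches the derivation the paper intends via Remark \ref{rem:subspacekilling} and the proof of Theorem \ref{thm:speciallinear}, including the constant $\frac{2\sqrt{2}}{d_1d_2(d_1^2d_2^2-1)}$.
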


In light if Proposition \ref{prop:fundamental2}, for every $N$, and sufficiently large $p$, systole growth is at least logarithmic in volume up every congruence $p$-tower, and this fact is independent of the choice of metric.
%

%

\section{Arithmetic Hyperbolic Orbifolds}\label{section:hyperbolic}

In this section, we specialize our results to real, complex, and quaternionic hyperbolic orbifolds and then prove Theorem \ref{thm:standardhyperbolic}.
Real hyperbolic $n$-orbifolds arise from lattices in $\SO_0(n,1)$.
Relative to the canonical embedding, $\mathfrak{so}(n,1)$ is a Lie subalgebra of $\mathfrak{sl}_{n+1}(\R)$, and the tangent space of hyperbolic $n$-space $\SO_0(n,1)/\SO(n)$ can be identified with
\begin{align}\label{eq:slnliealgebra}
\left\{ \left(
    \begin{array}{c;{2pt/2pt}r}
    \mbox{\LARGE $0$} & \begin{matrix} x_1\\ x_2\\ \vdots \\ x_n \end{matrix} \\ \hdashline[2pt/2pt]
    \begin{matrix} x_1& x_2& \ldots & x_n \end{matrix} & 0
    \end{array}
    \right) \ \Bigg| \ x_1, \ldots, x_n\in \R
    \right\}.
\end{align}
The Killing form on $\mathfrak{so}(n,1)$ is $(n-1)\tr(XY)$ and a direct computation \cite[V.3.1]{H} shows that the Killing metric has sectional curvature $-\frac{1}{2(n-1)}$.
The following is then a consequence of Theorem \ref{thm:virtualimmersion} and Remark \ref{rem:subspacekilling}.

\begin{cor}\label{cor:degree}
Let $N$ be an arithmetic real hyperbolic $n$-orbifold with field of definition $k$ of degree $d$.
Then $N$ is commensurable to an immersed totally geodesic subspace of a degree $d(2n^2+5n+3)$ special linear orbifold.
With respect to the subspace metric, this subspace has constant sectional curvature $-\frac{1}{4(n-1)d}$.
\end{cor}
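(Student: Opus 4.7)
The plan is to obtain both conclusions as a direct application of Theorem \ref{thm:virtualimmersion} together with Remark \ref{rem:subspacekilling}; no further arithmetic or analytic input is required beyond what has already been established in the paper.

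First, for the degree claim: let $\mathrm{G}$ denote the connected, simple, adjoint algebraic $k$-group associated to $N$, so that $\mathrm{G}(\R)$ is isogenous to $\SO_0(n,1)$ at the unique isotropic real place of $k$ and is compact at every other real place. Applying Theorem \ref{thm:virtualimmersion} to $N$ yields an orbifold commensurable to $N$ that immerses as a totally geodesic suborbifold of a standard special linear orbifold of degree $[k:\Q]\cdot\dim\mathrm{G}=d\cdot\dim\mathrm{G}$. One then identifies $\dim\mathrm{G}$ from the Killing--Cartan type of $\mathfrak{so}(n,1)$ (type $B_{n/2}$ when $n$ is even, type $D_{(n+1)/2}$ when $n$ is odd), reading the value off the table following Proposition \ref{prop:speciallinearembeddings}, and verifies algebraically that $d\cdot\dim\mathrm{G}=d(2n^2+5n+3)$.

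For the curvature, I invoke Remark \ref{rem:subspacekilling}: the subspace metric on the immersed copy of $N$ inherited from the canonical embedding of Proposition \ref{prop:speciallinearembeddings} equals $2d_2=2d$ times the Killing metric on $N$. Combined with the paper's preceding computation (via \cite[V.3.1]{H}) that the Killing metric on $\SO_0(n,1)/\SO(n)$ has constant sectional curvature $-\frac{1}{2(n-1)}$, together with the elementary fact (used already in Lemma \ref{lem:fundamental1}) that scaling a Riemannian metric by $\alpha>0$ scales sectional curvatures by $1/\alpha$, the subspace metric has constant sectional curvature
\[ -\frac{1}{2(n-1)\cdot 2d}=-\frac{1}{4(n-1)d}, \]
as claimed.

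The only step that requires any real care is the algebraic bookkeeping in identifying $\dim\mathrm{G}$ and producing the explicit value $2n^2+5n+3$; once that is settled, both conclusions of the corollary reduce to direct substitution into Theorem \ref{thm:virtualimmersion} and Remark \ref{rem:subspacekilling}, with no additional geometric or arithmetic argument required. I therefore expect this dimensional identification to be the main (and essentially only) obstacle in the proof.
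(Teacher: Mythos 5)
Your overall route --- Theorem \ref{thm:virtualimmersion} for the degree claim, and Remark \ref{rem:subspacekilling} together with the curvature $-\frac{1}{2(n-1)}$ of the Killing metric on $\mathfrak{so}(n,1)$ for the curvature claim --- is exactly the paper's (its proof is the single sentence preceding the corollary), and your curvature computation $-\frac{1}{2(n-1)}\cdot\frac{1}{2d}=-\frac{1}{4(n-1)d}$ is correct and matches the paper. The gap is precisely in the step you single out as the only delicate one: the identity $d\cdot\dim\mathrm{G}=d(2n^2+5n+3)$ cannot be ``verified algebraically,'' because it is false for the groups you name. For an arithmetic real hyperbolic $n$-orbifold the relevant adjoint group has type $B_{n/2}$ ($n$ even) or $D_{(n+1)/2}$ ($n$ odd), and the table following Proposition \ref{prop:speciallinearembeddings} gives $\dim\mathrm{G}=2r^2+r$, resp.\ $2r^2-r$, which in both cases equals $\frac{n(n+1)}{2}$. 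Hence Theorem \ref{thm:virtualimmersion} yields an immersion into a standard special linear orbifold of degree $d\,\frac{n(n+1)}{2}$, not $d(2n^2+5n+3)$. Note that $2n^2+5n+3=2(n+1)^2+(n+1)$ is what one obtains by substituting $r=n+1$ (the matrix size in $\SO(n,1)\subset\SL_{n+1}(\R)$) into $2r^2+r$ rather than the actual rank; the two quantities differ for every $n\ge 2$ (e.g.\ $n=4$ gives $10d$ versus $55d$), so the verification you promise does not go through.

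As written, then, your argument actually establishes an immersion into a special linear orbifold of the smaller degree $d\,\frac{n(n+1)}{2}$, and to reach the literal statement with degree $d(2n^2+5n+3)$ you would need an additional argument (for instance, a further totally geodesic, arithmetically compatible embedding of a standard special linear orbifold of degree $m$ into one of larger degree) which you do not supply --- and which the paper does not supply either: the one-line proof in the paper hides the same discrepancy, which then propagates into the constants of Corollary \ref{cor:hyperbolic_inequality_d1d2_vol}. You should state explicitly which value of $\dim\mathrm{G}$ you are using; with the correct value $\frac{n(n+1)}{2}$ your method proves the corollary with degree $d\,\frac{n(n+1)}{2}$, and the curvature assertion is unaffected, since Remark \ref{rem:subspacekilling} involves only $d_2=d$ and not $\dim\mathrm{G}$.
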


Before stating the following corollary, we recall that real hyperbolic orbifolds will be given the hyperbolic metric $h_{\R}$ in which they have constant sectional curvature $-1$ unless an alternative Riemannian metric $g$ is explicitly given.

\begin{cor}\label{cor:hyperbolic_inequality_d1d2_vol}
Let $N$ be an arithmetic real hyperbolic $n$-orbifold, $n\ge 4$, with field of definition $k$ of degree $d$.
There exists a constant $c_2:=c_2(N)$  such that for all primes $p>2(2n^2+5n+3)d$ and all positive integers $m$,
\begin{align}
 \sys(N_{p^m})\ge \frac{\sqrt{2}}{144d^{7/2}n^{7/2}} \log(\Vol(N_{p^m})) - c_2.
\end{align}
\end{cor}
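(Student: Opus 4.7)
The plan is to realize $N$ up to commensurability as a totally geodesic suborbifold of a standard special linear orbifold $M_0$ of degree $N_0 := d(2n^2+5n+3)$ using Corollary \ref{cor:degree}; by Proposition \ref{prop:fundamental2}(ii), this replacement does not affect the validity of the claim. The same corollary records that the induced subspace metric $g_{\mathrm{sub}}$ on $N$ has constant sectional curvature $-\frac{1}{4(n-1)d}$, hence $h_\R = \frac{1}{4(n-1)d}\,g_{\mathrm{sub}}$ and Lemma \ref{lem:fundamental1}(i) gives $\sys(N_{p^m}, h_\R) = \frac{1}{2\sqrt{(n-1)d}}\,\sys(N_{p^m}, g_{\mathrm{sub}})$.

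Since the immersion is totally geodesic, closed geodesics in $N_{p^m}$ lift to closed geodesics of equal length in $(M_0)_{p^m}$ relative to the ambient geometric metric $g$, so $\sys(N_{p^m}, g_{\mathrm{sub}}) \ge \sys((M_0)_{p^m}, g)$. Under the hypothesis $p > 2N_0$, Corollary \ref{cor:tracelengthcor} provides
\[
\sys((M_0)_{p^m}, g) \ge \frac{2\sqrt{2}}{N_0}\arccosh\!\left(\frac{p^m - N_0}{N_0}\right) \ge \frac{2\sqrt{2}}{N_0}\left(m\log p - \log(2N_0)\right),
\]
using $\arccosh y \ge \log y$ for $y\ge 1$ and $p^m - N_0 \ge p^m/2$. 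On the volume side, Remark \ref{rem:natrualcongruence} implies that the induced tower $\{N_{p^m}\}$ is of bounded distance from the natural congruence tower through principal congruence subgroups of $\mathrm{G}(\mathcal{O}_k)$; since $|\mathrm{G}(\mathcal{O}_k/p^m\mathcal{O}_k)| \le p^{md\dim\mathrm{G}}$ and $\dim\mathrm{G} = n(n+1)/2$, this gives $\log\Vol(N_{p^m}, h_\R) \le \frac{mdn(n+1)}{2}\log p + C_N$ for some $C_N = C(N)$.

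Solving the volume inequality for $m\log p$, substituting into the systole bound, and finally rescaling from $g_{\mathrm{sub}}$ to $h_\R$ yields
\[
\sys(N_{p^m}, h_\R) \ge \frac{2\sqrt{2}}{d^{5/2}\sqrt{n-1}\,n(n+1)^2(2n+3)}\log\Vol(N_{p^m}, h_\R) - c_2,
\]
for an explicit $c_2 = c_2(N)$. The proof concludes with the elementary comparison showing this coefficient is at least $\frac{\sqrt{2}}{144\,d^{7/2}n^{7/2}}$: applying crude bounds such as $(n+1)\le \frac{5n}{4}$ and $(2n+3)\le \frac{11n}{4}$ (valid for $n\ge 4$), this reduces to an inequality of the form $288\,d\,n^{5/2} \ge \sqrt{n-1}(n+1)^2(2n+3)$. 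I expect the main obstacle to be not any single geometric or arithmetic step---each is supplied by the preceding sections---but the careful bookkeeping of multiplicative factors along the chain trace $\to$ length $\to$ index $\to$ volume $\to$ hyperbolic metric, so that the final coefficient collapses to the particular form $\sqrt{2}/(144\, d^{7/2}\, n^{7/2})$.
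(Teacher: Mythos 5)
Your route is essentially the paper's, with one real improvement in the middle. Like the paper, you pass through Corollary \ref{cor:degree} (totally geodesic immersion into a special linear orbifold of degree $d(2n^2+5n+3)$, subspace metric of constant curvature $-\tfrac{1}{4(n-1)d}$), use $\sys(N_{p^m},g)\ge\sys(M_{p^m},g)$ together with Corollary \ref{cor:tracelengthcor}, and rescale to the hyperbolic metric at the end, invoking Proposition \ref{prop:fundamental2} for the commensurability reduction. Where the paper simply quotes the packaged bound (Theorem \ref{thm:speciallinear}, in the form of Corollary \ref{cor:general_inequality_d1d2_vol}) with the crude index exponent $(d_1d_2)^2-1$, you rerun the argument with the intrinsic index bound $[\Lambda:\Lambda(p^m)]\le (p^m)^{d\dim\mathrm{G}}$, $\dim\mathrm{G}=n(n+1)/2$, as in the proof of Theorem \ref{thm:standardhyperbolic}. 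Your intermediate coefficient $\frac{2\sqrt{2}}{d^{5/2}\sqrt{n-1}\,n(n+1)^2(2n+3)}$, of order $n^{-9/2}$, is consequently stronger than the paper's intermediate coefficient $\frac{\sqrt{2}}{d^{7/2}n(2n^2+5n+3)^3}$, of order $n^{-7}$.

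The gap is in your final step. The inequality you reduce to, $288\,d\,n^{5/2}\ge\sqrt{n-1}\,(n+1)^2(2n+3)$, is false once $n$ is large relative to $d$: the right side grows like $2n^{7/2}$, so for $d=1$ it already fails around $n\approx 145$ (e.g.\ at $n=150$ the right side is about $8.4\times 10^7$ versus $7.9\times 10^7$ on the left), and the cruder bounds $(n+1)\le\frac{5n}{4}$, $(2n+3)\le\frac{11n}{4}$ only establish it for $n\lesssim 67d$. So the stated constant $\frac{\sqrt{2}}{144\,d^{7/2}n^{7/2}}$ does not follow from your (correct, and sharper) intermediate bound for all $n\ge 4$, and no juggling of the additive constant $c_2$ can repair a deficit in the multiplicative constant. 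For what it is worth, the paper's own last line makes the same jump, from a coefficient of order $n^{-7}$ to $n^{-7/2}$, and that comparison fails already at $n=4$; the exponent $n^{7/2}$ in the statement is evidently a misprint (with $n^{7}$ in its place the paper's chain, and a fortiori your stronger bound, goes through for all $n\ge 4$). But for the statement as printed, your concluding comparison is a step that fails.
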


\begin{proof}
Let $g$ denote the subspace metric on $N$ and its covers.  By  scaling conversions \eqref{eq:scaling} and Theorem \ref{thm:speciallinear},
\begin{align*}
 \sys(N_{p^m}) 	&= \sqrt{\frac{1}{4(n-1)d}}\sys(N_{p^m},g)\\
 			&\ge \sqrt{\frac{1}{4(n-1)d}} \left(\frac{2\sqrt{2}}{((2n^2+5n+3)d)((2n^2+5n+3)d)^2-1)}\right) \log(\Vol(N_{p^m}),g) - c'_2\\
			&\ge \left(\frac{\sqrt{2}}{d^{7/2}n(2n^2+5n+3)^3}\right) \log(\Vol(N_{p^m}),g) - c'_2\\
			&\ge \frac{\sqrt{2}}{144d^{7/2}n^{7/2}} \log(\Vol(N_{p^m})) - c_2.
\end{align*}
\end{proof}

Complex and quaternionic hyperbolic space (which we denote $\mathbb{H}^n_{\C}$ and $\mathbb{H}^n_{\mathbf{H}}$, respectively) arise as the globally symmetric spaces associated to the Lie groups $\SU(n,1)$ and $\Sp(n,1)$, respectively.  The metric is often chosen so that the sectional curvature is pinched between $-1$ and $-\frac{1}{4}$.
We do so here and refer to this as their hyperbolic metrics, $h_\C$ and $h_{\mathbf{H}}$, respectively.  
In both of these cases, $\SO(n,1)$ naturally embeds as a Lie subgroup and, relative to their hyperbolic metrics, the associated totally geodesic, real hyperbolic space $\mathbb{H}^n_{\R}$ has constant sectional curvature $-\frac{1}{4}$ (see, for example, \cite{Epstein} and \cite{KPan}).  
We use this fact to renormalize from the subspace metric to the hyperbolic metric in Lemma \ref{lem:metricrenormalization} below.

Noncompact standard arithmetic lattices in each of these groups are constructed as follows.

\textit{Case 1: Real Hyperbolic.}  Let $(V,q)$ be a quadratic space over $\Q$ with signature $(n,1)$.  Let $G=\SO(V,q)$.  The canonical $\Q$-embedding $\mathrm{G}\to  \SL_{n+1}(\Q)$ is $\Q$-rational.

\textit{Case 2: Complex Hyperbolic.}  Let $\Q(\sqrt{d})/\Q$ be an imaginary quadratic extension with involution $\overline{x+y\sqrt{d}}=x-y\sqrt{d}$.  Let $(V,h)$ be a Hermitian space over $\Q(\sqrt{d})/\Q$ with signature $(n,1)$.  Let $\mathrm{G}=\SU(V,h)$.  Corresponding to the embedding 
\begin{align}\label{eq:fieldembed}
\Q(\sqrt{d})\to \mathrm{Mat}_2(\Q), \hspace{2pc} x+ y \sqrt{d} \mapsto \begin{pmatrix} x & d y \\ y & x \end{pmatrix},
\end{align}
the canonical $\Q$-embedding $G\to \SL_{n+1}(\Q(\sqrt{d}))\to \SL_{2n+2}(\Q)$ is $\Q$-rational.

\textit{Case 3: Quaternionic Hyperbolic.}  Let $D/\Q$ be a quaternion division algebra that splits over $\R$, with Hilbert symbol $\left(\frac{d,e}{\Q}\right)$ and $j\in D$ such that $j^2=e$.  Let $(V,h)$ be a Hermitian space over $D/\Q$ with signature $(n,1)$.  
Let $\mathrm{G}=\SU(V,h)$.  
Corresponding to \eqref{eq:fieldembed} and the embedding 
\begin{align}\label{eq:divembed}
D\to \mathrm{Mat}_2(\Q(\sqrt{d})), \hspace{2pc} w+zj \mapsto \begin{pmatrix} w & ez \\  \overline{z} & \overline{w} \end{pmatrix},
\end{align}
the canonical $\Q$-embedding $G\to \SL_{n+1}(D)\to \SL_{4n+4}(\Q)$ is $\Q$-rational.

In each of these cases, use the $\Q$-rational embedding to define $\Lambda$, the $\Z$-points of $\mathrm{G}$, and let $\mathrm{K}$ be the maximal compact subgroup of $\mathrm{G}(\R)$.  A space commensurable with $N=\Lambda\backslash \mathrm{G}(\R)/\mathrm{K}$ is a \textit{noncompact standard real (resp. complex, quaternionic) hyperbolic orbifold}.  

Let $X_{m}:=\SL_m(\R)/\SO(m)$ denote the degree $m$ special linear space and let $g$ denote its geometric metric \eqref{eq:scaling}.  Relative to the above constructions, $\mathbb{H}^n_{\R}$  (resp. $\mathbb{H}^n_{\C}$, $\mathbb{H}^n_{\mathbf{H}}$) embeds as a totally geodesic subspace of $X_{n+1}$ (resp. $X_{2n+2}$, $X_{4n+4}$) and let $g_{\R}$ (resp. $g_{\C}$, $g_{\mathbf{H}}$) denote the induced subspace metric.

\begin{lem}\label{lem:metricrenormalization} With the above notation, 
\begin{align}
h_{\R}=\frac{1}{4}g_{\R}, \hspace{2pc}
h_{\C}=\frac{1}{2}g_{\C}, \hspace{2pc}
h_{\mathbf{H}}=\frac{1}{4}g_{\mathbf{H}}.
\end{align}
\end{lem}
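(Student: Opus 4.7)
The plan is to prove each of the three identities by (i) computing both metrics on a specific tangent vector at the basepoint of a common totally geodesic real hyperbolic slice, and (ii) using Schur's lemma to extend the pointwise proportionality globally.

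For the real case, I would take $X \in \mathfrak{so}(n,1) \subset \mathfrak{sl}_{n+1}(\R)$ of the form in \eqref{eq:slnliealgebra}. A direct block calculation gives $\tr(X^2) = 2\sum_i x_i^2$, so by the definition of the geometric metric in \eqref{eq:geometricform}, one has $\langle X, X\rangle_{g_\R} = 4\sum_i x_i^2$. Combining the paper's formula $B(X,Y) = (n-1)\tr(XY)$ on $\mathfrak{so}(n,1)$ with the fact that the Killing metric has sectional curvature $-\tfrac{1}{2(n-1)}$, and rescaling to curvature $-1$, yields $h_\R = \tfrac{1}{2(n-1)} B$, hence $\langle X, X\rangle_{h_\R} = \sum_i x_i^2$. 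So $g_\R = 4 h_\R$ at the basepoint, and by $\SO(n,1)$-invariance, globally.

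For the complex and quaternionic cases, I would use the fact, stated in the paper, that the totally geodesic $\mathbb{H}^n_\R$ has constant sectional curvature $-\tfrac14$ under $h_\C$ (resp. $h_{\mathbf{H}}$), so that $h_\C|_{\mathbb{H}^n_\R} = 4 h_\R$ and $h_{\mathbf{H}}|_{\mathbb{H}^n_\R} = 4 h_\R$. To compute $g_\C|_{\mathbb{H}^n_\R}$ and $g_{\mathbf{H}}|_{\mathbb{H}^n_\R}$, I trace a real matrix $X \in \mathfrak{so}(n,1) \subset \mathfrak{sl}_{n+1}(\R)$ through the composite embeddings: since the field embedding \eqref{eq:fieldembed} sends a real scalar $x$ to $xI_2$, the induced map $\mathfrak{sl}_{n+1}(\R) \hookrightarrow \mathfrak{sl}_{2n+2}(\R)$ sends $X$ to $X \otimes I_2$ up to basis permutation; further composition with \eqref{eq:divembed} sends a real scalar to $xI_4$, so $X$ maps to $X \otimes I_4$ in $\mathfrak{sl}_{4n+4}(\R)$. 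Using $(X \otimes I_k)^2 = X^2 \otimes I_k$, and hence $\tr((X \otimes I_k)^2) = k\tr(X^2)$, one obtains $\langle X, X\rangle_{g_\C} = 2\langle X, X\rangle_{g_\R}$ and $\langle X, X\rangle_{g_{\mathbf{H}}} = 4\langle X, X\rangle_{g_\R}$. Combining with $g_\R = 4 h_\R$ yields $g_\C|_{\mathbb{H}^n_\R} = 2\, h_\C|_{\mathbb{H}^n_\R}$ and $g_{\mathbf{H}}|_{\mathbb{H}^n_\R} = 4\, h_{\mathbf{H}}|_{\mathbb{H}^n_\R}$.

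To extend these pointwise relations from the real hyperbolic slice to all of $\mathbb{H}^n_\C$ and $\mathbb{H}^n_{\mathbf{H}}$, I observe that both $g$ and $h$ are invariant Riemannian metrics under the full isometry group in each case, and the isotropy representations of $S(U(n) \times U(1))$ on $T\mathbb{H}^n_\C \cong \R^{2n}$ and of $\Sp(n) \times \Sp(1)$ on $T\mathbb{H}^n_{\mathbf{H}} \cong \R^{4n}$ are real-irreducible, since $U(n)$ (resp. $\Sp(n)$) already acts transitively on the corresponding unit sphere. By Schur's lemma, any two invariant inner products at a point differ by a global scalar, so the proportionalities established on the real hyperbolic slice extend globally, yielding $h_\C = \tfrac12 g_\C$ and $h_{\mathbf{H}} = \tfrac14 g_{\mathbf{H}}$. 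The main obstacle is accurately identifying the tensor-product structure that arises from the composition of the field and division-algebra embeddings and verifying the resulting trace identities; once that is in hand, the rescaling in the real case together with Schur's lemma cleanly give all three identities.
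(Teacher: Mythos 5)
Your proposal is correct and follows essentially the same route as the paper: compute that the embeddings \eqref{eq:fieldembed} and \eqref{eq:divembed} multiply the induced trace form on $\mathfrak{so}(n,1)$ by $2$ and $4$ respectively, compare with the curvature normalization on the totally geodesic real hyperbolic slice (curvature $-\tfrac14$ under $g_{\R}$, hence $-\tfrac18$ and $-\tfrac1{16}$ under the larger subspace metrics), and renormalize. Your explicit Schur's-lemma/irreducible-isotropy argument simply makes precise the proportionality of invariant metrics that the paper's ``renormalizing'' step uses implicitly.
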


\begin{proof}
A direct computation shows that the sectional curvature \cite[V.3.1]{H} $(\mathbb{H}^n_{\R},g_{\R})$ is $-\frac{1}{4}$, and hence the metric of constant sectional curvature $-1$ is $\frac{1}{4}g$.
It follows from \eqref{eq:geometricform}, \eqref{eq:fieldembed}, and \eqref{eq:divembed} that the subspace metric on $\mathfrak{so}(n,1)$ in $\mathfrak{sl}_{2n+2}(\R)$ (resp. $\mathfrak{sl}_{4n+r}(\R)$) is twice (resp. four times) the subspace metric coming from $\mathfrak{sl}_{n+1}(\R)$, which has constant sectional curvature $-\frac{1}{4}$.  Hence $(\mathbb{H}_{\R}^n, g_{\C})$ (resp. $(\mathbb{H}_{\R}^n, g_{\mathbf{H}})$) has constant sectional curvature $-\frac{1}{8}$ (resp. $-\frac{1}{16}$).
Renormalizing the metric so that $\mathbb{H}_\R^n$ has constant sectional curvature $-\frac{1}{4}$ yields the desired results.
\end{proof}

\begin{proof}[Proof of Theorem \ref{thm:standardhyperbolic}]  
Begin by supposing that $N=\Lambda\backslash \mathrm{G}(\R)/\mathrm{K}$ as above.
It follows that $N_{p^m}= \Lambda(p^m)\backslash \mathrm{G}(\R)/\mathrm{K}$ where $\Lambda(p^m):=\Gamma(p^m)\cap \mathrm{G}(\R)$.
Without loss of generality, we may assume the form is given by $\langle a_1,\ldots a_n, -a_{n+1}\rangle$ where each $a_i$ is a positive integer.  Let $S$ be the set of odd primes that excludes the finitely many which divide the $a_i$. 
For each $p\in S$,   
%
$$\abs{N_{p^m}:N}=\abs{\Lambda:\Lambda(p^m)}\le\abs{\mathrm{G}(\Z/p^m\Z)}\leq(p^m)^{\dim \mathrm{G}}.$$

With respect to the subspace metric $g$ \eqref{eq:geometricform}, $(N_{p^m},g)$ is an isometrically immersed totally geodesic subspace of $M_{p^m}$.  Thus $\sys(N_{p^m}, g)\ge \sys(M_{p^m},g)$.
Let $b=1$, $2$, or $4$, depending upon whether $N$ is real, complex, or quaternionic.
Using the systole bound \eqref{firstsystolebound}:

\begin{align*}
\sys(N_{p^m},g)&\geq \frac{2\sqrt{2}}{b(n+1)}\left(\log\left(\abs{N_{p^m}:N}^{1/\dim \mathrm{G}} \right)-\log(2b(n+1))\right)\\
&= \frac{2\sqrt{2}}{b(n+1)}\left(\frac{1}{\dim \mathrm{G}}\log(\Vol(N_{p^m})/\Vol(N))-\log(2b(n+1))\right)\\
&= \frac{2\sqrt{2}}{b(n+1)\dim \mathrm{G}}\log(\Vol(N_{p^m}))-c.
\end{align*}

Note that $\dim \SO(n+1) = \frac{n(n+1)}{2}$, $\dim \SU(n+1)= n(n+2)$, and $\dim \Sp_{2n+2} = (n+1)(2n+3)$.  

By Lemma \ref{lem:metricrenormalization} and  \eqref{eq:scaling},
$\sys(N_{p^m})=\frac{1}{\sqrt{\kappa}}\sys(N_{p^m},g)$, where $\kappa=4$, $2$, or $4$, depending upon whether $N$ is real, complex, or quaternionic, which proves the growth bounds for principle standard arithmetic hyperbolic orbifolds.  
The general case follows from Proposition \ref{prop:fundamental2}.
\end{proof}

\section{Arithmetic Measure and the Proofs of Theorem \ref{thm:general} and Theorem \ref{cor:mainhyperbolic}}\label{section:proofsbc}

In the previous sections we established that for all but finitely many primes $p$, the systole growth up $p$-congruence covers is at least logarithmic in metric volume, and furthermore, we explicitly computed the multiplicative constant $c_1$ in terms of the algebraic data $\dim \mathrm{G}$ and $[k:\Q]$.  
In this section, we show how this algebraic data can be replaced with the geometric data of dimension and volume of $N$.
It is a direct computation for each $\mathrm{G}(\R)$ to write $\dim \mathrm{G}$ as a function of $\dim N$, see for example \cite[Ch. X. \S6. Table V]{H}.
Furthermore, the propositions of this section bound $[k:\Q]$ by an explicit function volume.
 Theorems \ref{thm:general} and \ref{cor:mainhyperbolic} then follow from Propositions \ref{prop:volumedegreebound} and \ref{prop:degreebounds}, respectively.  The propositions are highly technical and we assume familiarity with Prasad's volume formula \cite{P}.  


Let $N$, $\mathrm{G}$, $k$, and $r$ be as in the previous section and let $\widetilde{\mathrm{G}}$ denote the simply connected cover of $\mathrm{G}$.
If $\iota:\widetilde{\mathrm{G}}\to \mathrm{G}$ is the central isogeny, then we may lift the lattice $\pi_1(N)\subset \mathrm{G}(\R)$ to a lattice $\Lambda_N:=\iota^{-1}(\pi_1(N))\subset \widetilde{\mathrm{G}}(\R)$.  
It follows that $\Lambda_N \backslash \widetilde{\mathrm{G}}(\R)\cong   \pi_1(N) \backslash\mathrm{G}(\R)$.
By the \textit{arithmetic measure} $\mu_a$ on $N$ we mean the pushforward of Prasad's normalized Haar measure $\mu_\infty$ on $\Lambda_N \backslash \widetilde{\mathrm{G}}(\R)$ \cite[3.11]{P}.
In particular, $\mu_a(N):=\mu_\infty(\Lambda_N \backslash \widetilde{\mathrm{G}}(\R))$.

\begin{rem}\label{rem:volumemeasure}
As $\frac{\Vol(N_{p^m},g)}{\Vol(N,g)}=\frac{\mu_a(N_{p^m})}{\mu_a(N)}$, it follows that  $\log (\Vol(N_{p^m},g))=\log(\mu_a(N_{p^m}))+C$ where $C=C(N)$.
As such, Theorems \ref{thm:speciallinear} and \ref{thm:standardhyperbolic} and Corollaries \ref{cor:general_inequality_d1d2_vol} and \ref{cor:hyperbolic_inequality_d1d2_vol} hold equally well when metric volume is replaced with arithmetic measure. 
\end{rem}
%

Let $m_1\leq m_2 \leq \cdots \leq m_r$ be the exponents of the simple, simply connected, compact real-analytic Lie group of the same type as $\mathrm{G}$. These exponents can be found in \cite{Bourbaki}. Given these exponents, we define a function $f(m_1,\dots, m_r)$ by \[f(m_1,\dots, m_r)=\prod_{i=1}^r \frac{m_i!}{(2\pi)^{m_i+1}}.\]

\begin{equation} \label{exponentstable}
\begin{array}{c|c|c}
\textup{Killing-Cartan Type} & \textup{Exponents} & \textup{Lower bound for }f(m_1,\dots, m_r) \\
\hline
\rule{0pt}{2.5ex} 
A_r & 1,2,\dots, r & 10^{-32}  \\
\hline
B_r & 1,3,5,\dots,2r-1 & 10^{-16} \\
\hline
C_r & 1,3,5,\dots,2r-1 & 10^{-16} \\
\hline
D_r & 1,3,5,\dots, 2r-3, r-1 & 10^{-19} \\
\hline
E_6 & 1,4,5,7,8,11 & 10^{-15} \\
\hline 
E_7 & 1,5,7,9,11,13,17 & 10^{-13} \\
\hline
E_8 & 1,7,11,13,17,19, 23, 29 & 8434.1205\cdots \\
\hline 
F_4 & 1,5,7,11 & 10^{-9} \\
\hline
G_2 & 1,5 & 10^{-5}
\end{array}
\end{equation}

\begin{prop}\label{prop:volumedegreebound}
Let $N$ be an arithmetic simple locally symmetric space of dimension $n$, arithmetic measure less than $v$, and such that $\Lambda_N$ is contained in a principal arithmetic subgroup of $\widetilde{\mathrm{G}}(\R)$.  Let $k$ denote the field of definition of $N$.  Then the degree of $k$ is less than $c\log(v)$ where $c$ is the lower bound for $f(m_1,\dots, m_r)$ given in (\ref{exponentstable}). In particular $[k:\Q]\leq 10^{32}\log(v)$.
\end{prop}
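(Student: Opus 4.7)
My plan is to use Prasad's volume formula to produce a lower bound on $\mu_a(N)$ that grows exponentially in $[k:\Q]$, and then invert this against the hypothesis $\mu_a(N)<v$ to extract the bound on $[k:\Q]$. Since $\Lambda_N$ sits inside a principal arithmetic subgroup $\Lambda$ of $\widetilde{\mathrm{G}}(\R)$, the measure $\mu_a(N)$ is an integer multiple of $\mu_\infty(\Lambda\backslash\widetilde{\mathrm{G}}(\R))$, so it suffices to lower bound the latter covolume.

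Prasad's formula \cite[Theorem 3.7]{P} expresses $\mu_\infty(\Lambda\backslash\widetilde{\mathrm{G}}(\R))$ as an explicit product of a discriminant factor $D_k^{(1/2)\dim\mathrm{G}}$, an outer-form correction $(D_\ell/D_k^{[\ell:k]})^{(1/2)s(\mathrm{G})}$, the Tamagawa number $\tau_k(\widetilde{\mathrm{G}})=1$, a product of local Euler factors $\prod_v e(P_v)$, and an archimedean contribution built from $f(m_1,\ldots,m_r)=\prod_{i=1}^r m_i!/(2\pi)^{m_i+1}$. The outer-form correction is $\geq 1$ by the conductor--discriminant formula, and each $e(P_v)\geq 1$ for principal parahoric data, so after dropping these factors one obtains $\mu_a(N) \geq D_k^{(1/2)\dim\mathrm{G}}\cdot f^{[k:\Q]}$. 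Combining with an effective discriminant lower bound $D_k \geq A^{[k:\Q]}$ (Minkowski, or Odlyzko where needed) yields
\[ \mu_a(N) \;\geq\; \bigl(A^{(1/2)\dim\mathrm{G}}\cdot f\bigr)^{[k:\Q]}. \]
A Killing--Cartan case-by-case check against the exponents in \eqref{exponentstable} then confirms that $A^{(1/2)\dim\mathrm{G}(r)}\cdot f(r) \geq 1/c$, where $c$ is the tabulated lower bound for $f$ on that type. Feeding $v>\mu_a(N)\geq (1/c)^{[k:\Q]}$ through logarithms gives
\[ [k:\Q] \;<\; \frac{\log v}{\log(1/c)} \;\leq\; \frac{\log v}{c}, \]
where the last inequality uses $\log(1/c)\geq c$, which is automatic for the small values of $c$ in the table; the worst case $c=10^{-32}$ (type $A$) produces the ``in particular'' bound $[k:\Q]\leq 10^{32}\log v$.

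The main obstacle is the type-by-type verification of $A^{(1/2)\dim\mathrm{G}(r)}\cdot f(r) \geq 1/c$ uniformly in rank. This is tightest for low-rank groups of type $A$ (where $\dim\mathrm{G}(r)$ is small) and near $r\approx 17$ where $f(r)$ attains its universal minimum of order $10^{-32}$; the small-rank regime cannot be handled by the crude Minkowski estimate alone, so a sharper Odlyzko-type input for $D_k^{1/[k:\Q]}$ is required. Secondary technical points include justifying $e(P_v)\geq 1$ at every finite place $v$ under the principal arithmetic hypothesis, and controlling the outer-form correction when $\mathrm{G}$ is of type $^{2}\!A_r$, $D_r$, or $E_6$, where the auxiliary extension $\ell/k$ appears.
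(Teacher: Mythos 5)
Your reduction to a principal arithmetic subgroup, the appeal to Prasad's volume formula with $\tau_k(\widetilde{\mathrm{G}})=1$, and the discarding of the relative-discriminant factor and of the local term $\mathscr{E}=\prod_v e(P_v)\ge 1$ all coincide with the paper. The divergence is in what you do with the remaining factors: the paper also discards the discriminant term via $\mathrm{D}_k\ge 1$, obtains $v\ge f(m_1,\dots,m_r)^{[k:\Q]}$, and concludes directly from the table (\ref{exponentstable}), whereas you retain $\mathrm{D}_k^{\frac{1}{2}\dim\mathrm{G}}$ and import Minkowski/Odlyzko discriminant bounds together with a rank-by-rank verification. The genuine gap is that the inequality you defer to, $A^{\frac{1}{2}\dim\mathrm{G}}\cdot f\ge 1/c$, is both the wrong target and false. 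It is the wrong target because, writing $B=A^{\frac{1}{2}\dim\mathrm{G}}f$ for the per-degree factor, the bound $[k:\Q]\le 10^{32}\log v$ only requires $\log B\ge 10^{-32}$, i.e.\ $B\ge e^{10^{-32}}$, which is barely above $1$ --- demanding $B\ge 10^{32}$ overshoots by more than thirty orders of magnitude. It is false because at small rank and small degree the per-degree factor is not even $\ge 1$: for type $A_1$ one has $\dim\mathrm{G}=3$ and $f=1/(4\pi^2)\approx 0.025$, so for $k=\Q$ the factor is $0.025$ and for $k=\Q(\sqrt{5})$ it is $5^{3/4}/(4\pi^2)\approx 0.085$; no Odlyzko-type input can repair this, since the root discriminants of degree one and two fields genuinely are $1$ and $\sqrt{5}$. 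Hence the ``case-by-case check'' your argument rests on cannot be carried out as stated, and the closing step using $\log(1/c)\ge c$ inherits the false premise.

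Even with the corrected target $B\ge e^{10^{-32}}$, the same small-degree examples show $B<1$, so an argument along your lines (which is essentially the Borel--Prasad finiteness argument) must treat bounded degrees separately and invoke discriminant growth only for large $[k:\Q]$; your proposal does not do this. By contrast, the paper's proof uses no discriminant lower bounds at all: it estimates $\mathrm{D}_k\ge 1$, the norm of the relative discriminant of $\ell/k$ by $1$, $\mathscr{E}\ge 1$, cites Kottwitz for $\tau_k(\widetilde{\mathrm{G}})=1$, and reads the constant off (\ref{exponentstable}) from $v\ge f(m_1,\dots,m_r)^{[k:\Q]}$. (Your instinct to keep the discriminant term is indeed the natural way to make such a read-off robust when the tabulated bound on $f$ is less than $1$, but then the quantitative bookkeeping must be done as above.) As written, the proposal does not establish the proposition.
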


\begin{proof}
We may assume without loss of generality that $\Lambda_N$ is a principal arithmetic subgroup of $\widetilde{\mathrm{G}}(\R)$. Prasad's formula \cite[Theorem 3.7]{P} for the covolume of a principal $S$-arithmetic subgroup implies that 
\begin{equation}\label{prasad}
v\geq \mathrm{D}_k^{\frac{1}{2}\dim \mathrm{G}}\left(\mathrm{D}_\ell /\mathrm{D}_k^{[\ell:k]}\right)^{\frac{1}{2}\mathfrak s(\mathscr G)}f(m_1,\dots, m_r)^{[k:\Q]}\mathscr E,
\end{equation}
where all notation is as in Section 3 of \cite{P}. In particular we note that $\mathrm{D}_k$ is the absolute value of the discriminant of $k$ and $\mathrm{D}_\ell$ is the absolute value of the discriminant of a certain finite degree extension $\ell$ of $k$. We have omitted the Tamagawa number $\tau_k(\widetilde{\mathrm{G}})$ in the above formula (which appears in the formula of Prasad) as the work of Kottwitz \cite{Kottwitz} shows that $\tau_k(\widetilde{\mathrm{G}})=1$ whenever $k$ is a number field. 

We now obtain a lower bound for (\ref{prasad}). First, observe that $\mathrm{D}_k\geq1$ and that $\mathrm{D}_\ell /\mathrm{D}_k^{[\ell:k]}$ is the norm from $k$ to $\Q$ of the relative discriminant of the extension $\ell/k$. Thus $\left(\mathrm{D}_\ell /\mathrm{D}_k^{[\ell:k]}\right)\geq 1$. Finally, $\mathscr E\geq 1$ by \cite[Remark 3.10]{P}. It follows that \[v\geq f(m_1,\dots, m_r)^{[k:\Q]}\] so that the proposition follows from (\ref{exponentstable}).
%
%
%
\end{proof}

The following proposition shows that when $N$ is an arithmetic hyperbolic orbifold we can obtain the same bound as in Proposition \ref{prop:volumedegreebound} without the assumption that $\Lambda_N$ is contained in a principal arithmetic group.

\begin{prop}\label{prop:degreebounds}
Let $N$ be an arithmetic hyperbolic orbifold of dimension $n\ge 4$ and hyperbolic volume less than $V$.  Let $k$ denote the field of definition of $N$.  Then the degree of $k$ is less than $c\log(V)$ where $c>0$ is an absolute, effectively computable constant. 
\end{prop}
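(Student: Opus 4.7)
The plan is to reduce this assertion to Proposition \ref{prop:volumedegreebound}, which handles the case when $\Lambda_N$ is contained in a principal arithmetic subgroup. For a general arithmetic hyperbolic orbifold this hypothesis need not hold, so the task is to exhibit, in the commensurability class of $\Lambda_N$, a principal arithmetic subgroup $\Lambda^*$ whose covolume is comparable to $V$, and then apply Prasad's volume formula to $\Lambda^*$.

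First I would pass to a maximal arithmetic lattice $\Gamma$ containing $\Lambda_N$; this only decreases the covolume, so $\covol(\Gamma) \le V$. By the classification of maximal arithmetic subgroups due to Borel, $\Gamma$ is the normalizer of a principal arithmetic subgroup $\Lambda^*$ determined by a coherent family of parahorics, and $[\Gamma : \Lambda^*]$ admits an explicit upper bound in terms of the class number $h_k$ of $k$ and a factor of the form $2^{|T|}$, where $T$ is the set of finite places at which the parahoric data is non-hyperspecial. Combining Minkowski's bound $h_k \le \sqrt{\mathrm{D}_k}$ with a bound on $|T|$ in terms of $\mathrm{D}_k$ (which controls the number of primes where the relevant quadratic or quaternionic form ramifies) yields an estimate of the form
\[
[\Gamma : \Lambda^*] \le \mathrm{D}_k^{C_0}
\]
for an absolute, effectively computable constant $C_0$.

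Combining these estimates gives $\covol(\Lambda^*) \le \mathrm{D}_k^{C_0} V$, while Prasad's formula \eqref{prasad} applied to $\Lambda^*$ yields the lower bound $\covol(\Lambda^*) \ge f(m_1,\ldots,m_r)^{[k:\Q]} \mathrm{D}_k^{\frac{1}{2}\dim \mathrm{G}}$ (all other factors in \eqref{prasad} being at least $1$). Since $\dim \mathrm{G} \ge \dim \mathfrak{so}(n,1) = n(n+1)/2 \ge 10$ for $n\ge 4$, choosing $C_0 < \tfrac{1}{2}\dim \mathrm{G}$ allows the factor $\mathrm{D}_k^{\frac{1}{2}\dim \mathrm{G}-C_0} \ge 1$ to be absorbed, leaving $f(m_1,\ldots,m_r)^{[k:\Q]} \le V$. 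Taking logarithms and invoking the uniform lower bound on $f$ from \eqref{exponentstable} produces $[k:\Q] \le c\log V$ for an absolute, effectively computable constant $c>0$.

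The main obstacle is establishing the polynomial bound $[\Gamma : \Lambda^*] \le \mathrm{D}_k^{C_0}$ with an absolute constant $C_0$ that is independent of the particular form giving rise to $\mathrm{G}$. While Borel's classification expresses this index as an explicit product of Galois-cohomological factors at each non-hyperspecial place, extracting a clean bound depending only on $\mathrm{D}_k$ requires combining Minkowski-type estimates for $h_k$ with effective bounds on the number of ramified primes and careful case analysis of the orthogonal and spin groups arising for $n\ge 4$; this is where the hyperbolic hypothesis (as opposed to the general setting of Proposition \ref{prop:volumedegreebound}) is genuinely used, since the classical nature of $\mathrm{G}$ makes such explicit bounds available.
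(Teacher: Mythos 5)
Your skeleton (pass to a maximal lattice $\Gamma \supseteq \Lambda_N$, invoke Borel's description of $\Gamma$ as the normalizer of a principal arithmetic subgroup $\Lambda^*$, bound $[\Gamma:\Lambda^*]$, then apply Prasad) is indeed the right idea, but it is not what the paper does: the paper reduces to $\Lambda_N$ maximal and then simply quotes the explicit volume lower bounds for maximal arithmetic quotients of $\SO(n,1)$ due to Belolipetsky \cite{Bel} ($n$ even) and Belolipetsky--Emery \cite{BE} ($n$ odd), inequalities \eqref{eveneq1} and \eqref{oddneq1}, in which all of the normalizer-index bookkeeping you propose to redo has already been carried out. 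Measured against that, your sketch has concrete gaps at exactly the quantitative points it would need to supply. First, $h_k\le\sqrt{\mathrm{D}_k}$ is not Minkowski's bound and is false in general; the usable estimates (via the class number formula, bounds on the residue of $\zeta_k$, and Zimmert's regulator bound) have the shape $h_k\le c\,\mathrm{D}_k^{1/2}(\log \mathrm{D}_k)^{[k:\Q]-1}$ or carry factors $C^{[k:\Q]}$, and such degree-dependent factors cannot be waved away, since the degree is precisely the quantity being bounded. Second, the set $T$ of non-hyperspecial places is \emph{not} controlled by $\mathrm{D}_k$: already over $k=\Q$ a quadratic form can ramify at arbitrarily many primes, so no bound $[\Gamma:\Lambda^*]\le \mathrm{D}_k^{C_0}$ with an absolute $C_0$ is available. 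The standard remedy (used in \cite{Bel,BE}) is to offset the factor each bad place contributes to the index against the corresponding local factor in the Euler product $\mathscr{E}$ of Prasad's formula; your step that only uses $\mathscr{E}\ge 1$ throws this compensation away.

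Third, and most seriously, the final deduction collapses: for the groups arising from hyperbolic orbifolds of moderate dimension the quantity $f(m_1,\dots,m_r)$ is \emph{less} than $1$ (e.g.\ type $B_2$ for $n=4$ gives $f\approx 10^{-4}$, and the uniform lower bounds in \eqref{exponentstable} are $10^{-16}$, $10^{-19}$), so an inequality of the form $f(m_1,\dots,m_r)^{[k:\Q]}\le V$ is satisfied for \emph{all} sufficiently large degrees and yields no upper bound on $[k:\Q]$ whatsoever. You cannot afford to ``absorb'' the discriminant term: the engine of the argument in \cite{Bel,BE} is Minkowski's (or Odlyzko's) lower bound $\mathrm{D}_k\ge A^{[k:\Q]}$ with $A>1$, which makes the factor $\mathrm{D}_k^{\frac{1}{2}\dim\mathrm{G}-\,\cdot}$ in \eqref{prasad} grow exponentially in the degree and dominate both the small per-degree constant $f$ and the degree-dependent losses from the class number and index bounds. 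Without retaining the discriminant and using its exponential growth in $[k:\Q]$, the proposed chain of estimates does not prove the proposition.
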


\begin{proof}
We may assume without loss of generality that $\Lambda_N$ is a maximal arithmetic group.

We begin by considering the case in which $n=2r$ is even. In Section 3 of \cite{Bel} Belolipetsky used Prasad's volume formula to prove that 
\begin{equation}\label{eveneq1}
V \geq \frac{(2\pi)^r}{1\cdot 3 \cdots (2r-1)}\cdot \frac{4\mathrm{D}_k^{\frac{1}{2}\dim \mathrm G -1}}{10^2\left(\frac{\pi^2}{6}\right)^{[k:\Q]}}\cdot f(m_1,\dots, m_r)^{[k:\Q]}.
\end{equation}
Because $\mathrm G$ is of type $B_r$ in this case the exponents $m_1,\dots, m_r$ are equal to $1, 3, 5, \dots, 2r-1$ and $f(m_1,\dots, m_r)\geq 10^{-16}$ by (\ref{exponentstable}). As we also have $\mathrm D_k \geq 1$ and $\dim \mathrm G > 1$ we may simplify (\ref{eveneq1}) so as to obtain 
\begin{equation}\label{eveneq2}
[k:\Q]\leq c_1\log(V)
\end{equation}
where $c_1$ is a positive constant depending on $r$.

We now consider the case in which $n=2r-1$ is odd. 

When $r$ is odd Belolipetsky and Emery \cite[Section 7]{BE} have shown that 

\begin{equation}\label{oddneq1}
V\geq \frac{4\pi^r}{(r-1)!} \cdot \frac{\mathrm D_k^{r^2-r/2-2}\left(\frac{1}{2}\left(\frac{12}{\pi}\right)^2f(m_1,\dots, m_r)\right)^{[k:\Q]}}{32}.
\end{equation}

In this case $\mathrm{G}$ is of type $D_r$, hence (\ref{exponentstable}) shows that $f(m_1,\dots, m_r)\geq 10^{-19}$. As we also have $\mathrm D_k^{r^2-r/2-2}\geq 1$, we may simplify (\ref{oddneq1}) so as to obtain 

\begin{equation}\label{oddneq2}
[k:\Q]\leq c_2\log(V)
\end{equation}
where $c_2$ is a positive constant depending on $r$.

The case in which $r$ is even can be handled similarly using Section 8 of \cite{BE} and considering a variety of special cases (in this case admissible groups are of type $^{2}\mathrm D_{\frac{r-1}{2}}$ or $^{3,6}\mathrm D_4$).
\end{proof}
%
%
%





\end{document}